\def\ps@pprintTitle{%
	\let\@oddhead\@empty
	\let\@evenhead\@empty
	\def\@oddfoot{}%
	\let\@evenfoot\@oddfoot}
\journal{}
\newcommand{\PR}[1]{\ensuremath{\textnormal{Pr}\!\left\{{#1}\right\}}}
\newcommand{\inLaw}[2]{#1\,\stackrel{\mathcal{L}}{\sim}\,#2}
\newcommand{\bE}[1]{\ensuremath{\mathbb E\!\left[{#1}\right]}}
\newcommand{\Cov}[1]{\ensuremath{\textnormal{Cov}\!\left[{#1}\right]}}
\DeclareMathOperator*{\argmax}{arg\,max}
\newcommand{\mnorm}[1]{\left\lVert#1\right\rVert}
\newcommand{\normt}[1]{\left\lVert#1\right\rVert_2}
\newcommand{\mtnorm}[1]{\left\lVert#1\right\rVert}
\newcommand{\thetz}[0]{\pmb{\theta}^0}
\newcommand{\thets}[0]{\pmb{\theta}^*}
\newcommand{\thet}[0]{\pmb{\theta}}
\newcommand{\thetset}[0]{\varTheta}
\newcommand{\thetres}[0]{\theta_{\textnormal{res}}}
\newcommand{\thetmin}[0]{\theta_{\textnormal{min}}}
\newcommand{\mumin}[0]{\mu_{\textnormal{min}}}
\newcommand{\lammin}[0]{\lambda_{\textnormal{min}}}
\newcommand{\lammax}[0]{\lambda_{\textnormal{max}}}
\newcommand{\lnv}[0]{\pmb{l}_n}
\newcommand{\tnv}[0]{\pmb{t}_n}
\newcommand{\BFmat}[0]{\pmb{\Sigma}} %Fisher information matrix
\newcommand{\Fmat}[0]{\Sigma} %For elements of Fisher information matrix
\newcommand{\event}[1]{\mathcal{E}_{#1}}
\newcommand{\jnm}[0]{\pmb{J}_n}
\newcommand{\rnm}[0]{\pmb{R}_n}
\newcommand{\mim}[1]{\pmb{M}_{#1}}
\newcommand{\wm}[0]{\pmb{W}_{a}}
\newcommand{\eye}[0]{\pmb{I}}
\newcommand{\bmat}[0]{\pmb{B}}
\newtheorem{assumption}{\bf Assumption}
\newtheorem{theorem}{\bf Theorem}
\newtheorem{lemma}{\bf Lemma}
\newtheorem{remark}{\bf Remark}
\newtheorem{corollary}{\bf Corollary}
\newtheorem{proposition}{\bf Proposition}
\renewcommand{\qedsymbol}{$\hfill\blacksquare$}
\begin{document}

\begin{frontmatter}

\title{Non-Asymptotic Behavior of the Maximum Likelihood Estimate of a Discrete Distribution}

%% Group authors per affiliation:
%\author{Sina~Molavipour \corref{mycorrespondingauthor}, Germ\'{a}n~Bassi, Mikael~Skoglund}
%\address{}
%\cortext[mycorrespondingauthor]{Corresponding author}
%\ead{sinmo@kth.se}

%% or include affiliations in footnotes:
\author[mymainaddress]{Sina~Molavipour\corref{mycorrespondingauthor}}
\cortext[mycorrespondingauthor]{Corresponding author}
\ead{sinmo@kth.se}

\author[mymainaddress]{Germ\'{a}n~Bassi}
%\ead{support@elsevier.com}

\author[mymainaddress]{Mikael~Skoglund}
%\ead{support@elsevier.com}

\address[mymainaddress]{School of Electrical Engineering and Computer Science, KTH Royal Institute of Technology, Stockholm, Sweden}
%\address[mysecondaryaddress]{360 Park Avenue South, New York}

\begin{abstract}
In this paper, we study the maximum likelihood estimate of the probability mass function (\emph{pmf}) of $n$ independent and identically distributed (i.i.d.) random variables, in the non-asymptotic regime. We are interested in characterizing the Neyman--Pearson criterion, i.e., the log-likelihood ratio for testing a true hypothesis within a larger hypothesis. Wilks' theorem states that this ratio behaves like a $\chi^2$ random variable in the asymptotic case; however, less is known about the precise behavior of the ratio when the number of samples is finite. In this work, we find an explicit bound for the difference between the cumulative distribution function (\emph{cdf}) of the log-likelihood ratio and the \emph{cdf} of a $\chi^2$ random variable. Furthermore, we show that this difference vanishes with a rate of order $1/\sqrt{n}$ in accordance with Wilks' theorem.
\end{abstract}

\begin{keyword}
Wilks theorem \sep Log-likelihood ratio \sep $\chi^2$ approximation
\end{keyword}

\end{frontmatter}
\begingroup\def\thefootnote{}\footnotetext{This work was supported in part by the Knut and Alice Wallenberg
Foundation.}\endgroup

%\linenumbers

% ==========================================================
%===========================================================
%===========================================================
%======================== Introduction =====================
%===========================================================
%===========================================================
\section{Introduction}
\label{intro}
The maximum likelihood estimator (MLE), a conventional method for parameter estimation of a statistical model, has been extensively studied in fields such as statistics, information theory, and signal processing. This estimator possesses some remarkable asymptotic (in the number of samples) properties, both in parametric and non-parametric scenarios (\cite{ibragimov2013statistical,Wilks1938,Owen2001book}), such as the normality of the estimate for i.i.d. observations~\cite[Ch.~7.3]{lehmann2004elements} or the $\chi^2$ behavior of the log-likelihood ratio (LLR) between a null and alternative hypothesis, known as Wilks' theorem or phenomenon~\cite{Wilks1938,Owen1988empirical,billingsley1961statistical}. The LLR in particular and its properties have also been considerably investigated in analysis of significance and confidence intervals in hypothesis testing~\cite{Wilks1938, Owen2001book,billingsley1961statistical, spokoiny2012parametric, spokoiny2013bernstein, spokoiny2015bootstrap}.
In the present work, we intend to provide additional results in this direction.

Let us assume that the random variable $X\in\mathcal{X}$ is distributed according to $P_{\thetz}$, where the probability measure $P_{\thetz}$ is a member of a parametrized family of distributions $\mathcal{P}_\thetset=\{P_{\thet}: {\thet}\in\thetset\}$.
Consider $\thetset$ is a subset of $\mathbb{R}^r$, i.e., $P_{\thetz}$ is described with $r$ parameters.
In particular, we are interested in deriving a non-asymptotic bound on the \emph{cdf} of
\begin{equation}
\Lambda_n \triangleq 2 \Big[ \max\limits_{\thet\in\thetset} L_n(\thet) - L_n(\thetz) \Big]\,,
\label{eq:main_problem}
\end{equation}
where $L_n$ is the log-likelihood function of the parameter $\thet$ given $n$ i.i.d. samples\footnote{It is a common assumption to analyze the case of i.i.d. samples since the calculations become more tractable by eliminating dependencies among samples (see e.g.,~\cite{Wilks1938,Owen1988empirical}). It is possible to extend the results from this work to a more general model, e.g., a Markov process (as in~\cite{billingsley1961statistical}). However, we study only the simple case for ease of presentation.} of the random variable $X$, and assuming the maximum exists.
This represents the Neyman--Pearson criterion for testing the true hypothesis $\thetz$ within a larger (composite) hypothesis $\thetset$, i.e.,
\begin{align}
H_0: \thet=\thetz	\quad \leftrightarrow \quad H_1: \thet=\arg\max\limits_{\thet'\in\thetset} L_n(\thet').
\end{align}
A proper characterization of the statistical distribution of the LLR~\eqref{eq:main_problem} allows us to determine the performance of the aforementioned test. It is thus no surprise that a major line of research is concerned with identification of the asymptotic (\cite{Wilks1938, Owen2001book, billingsley1961statistical}) and non-asymptotic (\cite{spokoiny2015bootstrap} and the references therein) behavior of the LLR.
Additionally, hypothesis tests based on information-theoretic measures such as the mutual and directed information are related to the LLR given that they are defined in terms of the logarithm of a probability ratio. Therefore, the behavior of the LLR appears in the analysis of performance and significance of composite hypothesis tests based on said measures~\cite{kontoyiannis2016estimating, Mol2017TestforDIG}.

The first characterization of the behavior of $\Lambda_n$ is due to Wilks~\cite{Wilks1938}, who shows that the LLR is asymptotically distributed like a $\chi^2$ random variable, up to an error of order $1/\sqrt{n}$.
For a large but finite number of samples, we may obtain a similar characterization following a two-step approach: first, we establish that $\Lambda_n$ has a quadratic form, and second, we identify the behavior of the quadratic form as following a $\chi^2$ distribution.
A conventional technique for the first step is to employ a Taylor expansion of $\Lambda_n$, as proposed in~\cite{billingsley1961statistical}. More recently, in~\cite{spokoiny2012parametric, spokoiny2013bernstein, spokoiny2015bootstrap}, an alternative method is presented which uses a bracketing approach to express $\Lambda_n$ in the vicinity of two quadratic terms; specifically, it is shown that $\Lambda_n$ is pointwise close to a quadratic form via a penalty of order $1/\sqrt{n}$ with exponentially high probability in the non-asymptotic regime.
Various methods exist to approximate the behavior of the aforementioned quadratic form to that of a $\chi^2$, a collection of these is presented in~\cite{Prokhorov2013}. For instance, Spokoiny and Zhilova~\cite{spokoiny2015bootstrap} use Pinsker's inequality combined with the Kullback--Leiber divergence to make this approximation, achieving a penalty of $c/n^{1/8}$ where $c$ only depends on the dimension of $\thet$. Sharper bounds are available in the works of Benktus~\cite{bentkus2003dependence} and G\"{o}tze~\cite{gotze2014explicit}. 

The $\chi^2$ approximation of the LLR is also shown to be valid in high dimensional analysis, i.e., when $r$ is very large (albeit smaller than $n$). Portnoy~\cite{portnoy1988asymptotic} studies the MLE for exponential families with $r$ parameters and shows that the $\chi^2$ approximation holds if ${\frac{r^{3/2}}{n}\to 0}$. In a recent work~\cite{anastasiou2018bounds}, the authors obtain an explicit asymptotic bound to approximate the LLR with a $\chi^2$ variable, which is valid in high dimension if ${\frac{r^{14}}{n}\to 0}$. In~\cite{hjort2009extending, tang2010penalized}, the effect of large dimension is similarly analyzed when the estimation is performed assuming a multinomial distribution,  which is based on the work by Owen~\cite{Owen2001book}. In a recent work, the LLR for a logistic model is shown to behave asymptotically in high dimesion as a rescaled $\chi^2$ \cite{sur2017likelihood}. Our result in this work is closer to Portnoy's, yet different since the observed random variables belong to a discrete distribution parametrized with its \emph{pmf}.

The main contribution of this paper is to derive an \emph{explicit bound} on the difference between the \emph{cdf} of $\Lambda_n$ and that of a $\chi^2$ distribution, for a finite number of samples. We start by reformulating~\eqref{eq:main_problem} using a Taylor series to elicit its quadratic form (Lemma~\ref{Lemma_Taylor_NP}). The behavior of $\Lambda_n$ is then decomposed into its $\chi^2$ asymptotic component and a non-asymptotic penalty; the latter is first bounded via the matrix Bernstein inequality and second, via the $\chi^2$ approximation provided by Benktus~\cite{bentkus2003dependence}. The bound thus obtained is compared to one derived using the tools presented in~\cite{spokoiny2015bootstrap}, which requires additional assumptions to hold, and an improvement is shown for some values of $n$. Furthermore, we investigate the effect of large dimension $r$ and show that a sufficient condition for the asymptotic convergence is that $\frac{r^6}{n}\to 0$.

The paper is organized as follows. In Section~\ref{Sec:Prel}, notations and required definitions are presented. The main theorem is then introduced in Section~\ref{Sec:main} and subsequently proved in Section~\ref{sec:proof}. Finally, in Section~\ref{Sec:conclusion}, we compare our result with the one derived from~\cite{spokoiny2015bootstrap}, and the paper is concluded after some final remarks.

% ==========================================================
%===========================================================
%===========================================================
%======================== Preliminaries ====================
%===========================================================
%===========================================================
\section{Preliminaries}
\label{Sec:Prel}
We begin by describing notations we have used throughout the paper, and the investigated model is explained afterwards. Next, the maximum likelihood estimator of the model's parameter is reviewed. Finally, the quantity of interest, i.e., $\Lambda_n$, is expressed as a sum of a quadratic term and asymptotically negligible remainders.

% -----------------------------------------------------------------------------
%********--------------------------- %
%********-------  Notation --------- %
%********--------------------------- %
\subsection{Notation}
Given two integers $i$ and $j$, the expression $[i: j]$ denotes the set $\{i, i+1, \ldots, j\}$.
For a vector $\pmb{\alpha}$, the $j$-th component is denoted $\alpha_j$, while for a matrix $\pmb{M}$, $M_{jk}$ denotes the element in the $j$-th row and $k$-th column. 
We use $\operatorname{diag}(\pmb{\alpha})$ to denote a square diagonal matrix whose nonzero entries are the elements from the vector $\pmb{\alpha}$.
An all-one and all-zero column vectors are denoted $\mathbf{1}$ and $\mathbf{0}$, respectively.

For a matrix $\pmb{M}$, $\lammax(\pmb{M})$ and $\lammin(\pmb{M})$ indicate the maximum and minimum eigenvalues of $\pmb{M}$, respectively. Moreover, the spectral norm of $\pmb{M}$, defined as $\max\{\lammax(\pmb{M}), \allowbreak-\lammin(\pmb{M})\}$, is denoted by $\mtnorm{\pmb{M}}$.
For the $l_2$-norm of a vector $\pmb{\alpha}$, we use the notation $\normt{\pmb{\alpha}}$. 

For a function $g(\cdot\,;\thet)$ of a vector $\thet$, the notation $g'_{u}(\cdot\,;\thet)$ stands for the first derivative with respect to component $u$ as
\begin{align}\label{eq:gu_def}
g'_u(\cdot\,;\thet)\triangleq \partial g(\cdot\,;\thet) / \partial \theta_u \, ,
\end{align}
and similarly $g''_{uv}(\cdot\,;\thet)$, and $g'''_{uvw}(\cdot\,;\thet)$ denote the second, and third derivative with respect to the components $u$, $v$, and $w$, respectively.
Moreover, $\nabla g(\cdot\,;\thet)$ and $\nabla^2 g(\cdot\,;\thet)$ denote the gradient and Hessian matrix of the function $g(\cdot\,;\thet)$, respectively. 
% Finally, we use $\nabla_{\!j} g$ to denote the $j$-th component of the gradient of $g(\cdot)$.

For a random vector $\tnv$ and a probability distribution $\mathcal{T}$, the expression $\inLaw{\tnv}{\mathcal{T}}$ indicates that the distribution function $F_n(\pmb{p})$ of $\tnv$ at any continuity point $\pmb{p}\in\mathbb{R}^r$ converges to the distribution function $F(\pmb{p})$ corresponding to  $\mathcal{T}$ (see convergence in distribution~\cite{billingsley2013convergence}).

% -----------------------------------------------------------------------------
%********--------------------------- %
%********-------   Model   --------- %
%********--------------------------- %
\subsection{Model Definition}

Consider $n$ i.i.d. random variables $\{X_1,X_2,\dots,X_n\}$ distributed by $P(X;\thet)$ where $X\in\mathcal{X}=[1: r+1]$, and we set off to estimate its \emph{pmf} using a maximum likelihood estimator.
The \emph{pmf} of $X$ can be parametrized with a vector $\pmb{\theta}\in\thetset$ where $\thetset\subset\mathbb{R}^r$, i.e.,
\begin{equation}\label{eq:model}
 \PR{X=j}=P(j;\thet)=
 \begin{cases}
  \theta_j  & \textnormal{if } j\in[1:r]\\
  \thetres  & \textnormal{if } j=r+1\,,
 \end{cases}
\end{equation}
where we define 
\begin{equation}
\label{eq:theta_res}
\thetres \triangleq 1-\sum\nolimits_{j=1}^r \theta_j\,. 
\end{equation}
Throughout the paper, we denote the true value of the parameter vector by $\thetz$.

To prevent undefined behavior of some quantities, like the Fisher information matrix, we make the following assumption.
\vspace{1mm}
\begin{assumption}
\label{asm:nonzero}
All elements of the alphabet $\mathcal{X}$ have nonzero probability, i.e., $\thetmin>0$, where we 
define $\thetmin \triangleq\min\{\theta_1^0, \,\ldots, \,\theta_r^0,\, \thetres^0\}\,$.
\end{assumption}
\vspace{1mm}
Before continuing, let us define for simplicity the function
\begin{equation}
 g(X;\thet) \triangleq \log P(X;\thet)\,. 
 \label{eq:func_g}
\end{equation}
Then, the Fisher information matrix about the true parameter $\thetz$ contained in $X$ is defined as
\begin{align}
 \BFmat &\triangleq \bE{ \nabla g(X;\thetz)\, \nabla g(X;\thetz)^T} %\nonumber\\
 = \bE{-\nabla^2 g(X;\thetz)} \,, \label{eq:Fisher_def}
\end{align}
where the second equality holds under certain conditions~\cite[Lem.~5.3]{Lehmann1998}, which here we assume them to hold.
In our model, each element of the matrix $\BFmat$ may be characterized using the definition in~\eqref{eq:gu_def} as follows,
\begin{align}
 \Fmat_{uv} &= \bE{ g'_u(X;\thetz) \, g'_v(X;\thetz)} \nonumber\\
 &= \sum_{j=1}^{r+1} P(j;\thetz)\, g'_u(j;\thetz)\, g'_v(j;\thetz)\nonumber\\
 &= \sum_{j=1}^{r} \theta^0_j \, \frac{\partial (\log \theta^0_j)}{\partial \theta_u} \, \frac{\partial (\log \theta^0_j)}{\partial \theta_v} + \thetres^0 \, \frac{\partial (\log \thetres^0)}{\partial \theta_u} \, \frac{\partial (\log \thetres^0)}{\partial \theta_v}\,.
\end{align}
Given that $\thetres^0$ is a function of every component of $\thet$ as defined in~\eqref{eq:theta_res} while all the other components are independent of each other, we obtain
\begin{equation}
 \label{eq:sigma_matrix}
 \BFmat= \operatorname{diag}\! \left( \frac{1}{\theta^0_1}, \ldots, \frac{1}{\theta^0_r} \right) + \frac{1}{\thetres^0}\,\mathbf{1}\,\mathbf{1}^T \,.
\end{equation}
All the entries of $\BFmat$ are finite as long as Assumption~\ref{asm:nonzero} holds true.
Additionally, we may obtain the inverse of $\BFmat$ using the Sherman--Morrison--Woodbury formula~\cite[Sec.~0.7.4]{Horn2013}:
\begin{equation}
 \label{eq:sigma_inv}
 \BFmat^{-1}= \operatorname{diag}(\thetz) - \thetz\,(\thetz)^T \,.
\end{equation}

% -----------------------------------------------------------------------------
%********--------------------------- %
%********----   ML Estimate   ------ %
%********--------------------------- %
\subsection{ML Estimation}

Given the $n$ i.i.d. samples $\{X_1,X_2,\dots,X_n\}$, consider the log-likelihood function
\begin{align}
 L_n(\thet) \triangleq\log P(X_1,\dots,X_n;\thet)=\sum\nolimits_{i=1}^{n}g(X_i;\thet)\,,
\end{align}
where we use~\eqref{eq:func_g}.
Let us denote the solution to the ML-estimation as $\thets$ (assuming it exists), i.e.,
\begin{align}
 \thets \triangleq \argmax_{\thet\in\thetset} L_n(\thet)\,. \label{eq:ML_equ}
\end{align}
Hereafter and to simplify notation we use $L_n^*$ and $L_n^0$ to indicate $L_n(\thets)$ and $L_n(\thetz)$, respectively. 

It is well-known that under some regulatory conditions $\lim_{n\to\infty}\thets=\thetz$ with probability one (see e.g.,~\cite{Hajek1970, Hajek1972, LeCam1986}).
To analyze the convergence behavior, we may define
\begin{align}
 \lnv\triangleq\sqrt{n} (\thets-\thetz)\,. \label{eq:ln_def}
\end{align}
The following lemma provides a bound on the probability of having a large difference between the estimate and the true value of the parameter.
We may see that the tail probability of $\normt{\thets-\thetz}^2$ decreases exponentially fast with $n$.

\vspace{1mm}
\begin{lemma}
\label{Lemma_ln_bound}
The following bound holds for the $2$-norm of $\lnv$:
\begin{align}
\PR{\frac{1}{n}\normt{\lnv}^2>\delta} \leq 2r\exp\left(-\frac{2n\delta}{r}\right) \,.
\end{align}
%\begin{align}
% \PR{\normt{\lnv}^2>\delta} \leq 2r\exp\left(-\frac{2\delta}{r}\right) \,.
%\end{align}
\begin{proof}
See \ref{Appendix:Lemma_ln_bound_proof}.
\end{proof}
\end{lemma}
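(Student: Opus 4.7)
The plan is to exploit the very simple closed form of the MLE for the multinomial model and then combine Hoeffding's inequality with a union bound over the $r$ free coordinates.

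First I would note that for the parametrization in \eqref{eq:model}, the log-likelihood $L_n(\thet)$ is concave in $\thet$ on the simplex and its unique maximizer is the empirical frequency, i.e., $\theta^*_j = \frac{1}{n}\sum_{i=1}^n \mathbf{1}\{X_i = j\}$ for each $j\in[1:r]$. This gives the representation
\begin{equation*}
\frac{1}{n}\normt{\lnv}^2 \;=\; \normt{\thets-\thetz}^2 \;=\; \sum_{j=1}^{r} \bigl(\theta^*_j - \theta^0_j\bigr)^2,
\end{equation*}
where each summand is the square of a centered average of i.i.d.\ Bernoulli random variables.

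Next I would reduce the tail bound on the sum of squares to a tail bound on each coordinate by a union bound: if $\sum_{j=1}^r (\theta^*_j - \theta^0_j)^2 > \delta$, then there must exist some $j$ with $(\theta^*_j - \theta^0_j)^2 > \delta/r$, and hence
\begin{equation*}
\PR{\tfrac{1}{n}\normt{\lnv}^2 > \delta} \;\le\; \sum_{j=1}^{r} \PR{\bigl|\theta^*_j - \theta^0_j\bigr| > \sqrt{\delta/r}}.
\end{equation*}

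For each $j$, the variables $\mathbf{1}\{X_i=j\}$ are i.i.d.\ and lie in $[0,1]$ with mean $\theta^0_j$, so Hoeffding's inequality yields $\PR{|\theta^*_j-\theta^0_j|>\varepsilon} \le 2\exp(-2n\varepsilon^2)$. Setting $\varepsilon = \sqrt{\delta/r}$ and summing over $j\in[1:r]$ gives the claimed bound $2r\exp(-2n\delta/r)$.

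I do not expect any significant obstacle here: the main conceptual step is realizing that under \eqref{eq:model} the MLE is literally the empirical distribution, so the coordinates of $\thets-\thetz$ reduce to sample means of bounded i.i.d.\ indicators, for which Hoeffding's inequality is directly applicable. A minor subtlety is that the coordinates are not independent (since $\sum_j \theta^*_j \le 1$), but this does not matter for a union-bound argument on the $r$ free parameters, and that is why the residual coordinate $\thetres$ does not appear in the bound.
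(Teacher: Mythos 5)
Your proposal is correct and follows essentially the same route as the paper's proof in \ref{Appendix:Lemma_ln_bound_proof}: identify $\thets$ as the empirical distribution, apply a union bound over the $r$ coordinates with threshold $\sqrt{\delta/r}$ each, and invoke Hoeffding's inequality for the bounded i.i.d.\ indicators. No gaps.
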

\vspace{1mm}

Two other quantities of interest are the standardized score
\begin{align}
\tnv\triangleq \frac{1}{\sqrt{n}}\sum_{i=1}^{n}\nabla g(X_i;\thetz) \label{eq:tn_def}
\end{align}
and the empirical information matrix
\begin{align}
\jnm \triangleq -\frac{1}{n} \sum_{i=1}^{n}\nabla^2g(X_i;\thetz)\,. \label{eq:j_def}
\end{align}
There exist situations for finite $n$ in which any samples from a specific $x\in\mathcal{X}$ has not been observed. According to our model \eqref{eq:model}, this yields $\jnm$ to be singular. In order to avoid such deficiencies we make the following assumption to guarantee observing all members of $\mathcal{X}$.
\vspace{1mm}
\begin{assumption}\label{asm:J}
$n$ is sufficiently large such that $\jnm$ is non-singular and the inverse exists.
\end{assumption}
In the following by using these quantities, the LLR $\Lambda_n$~\eqref{eq:main_problem} may be expressed as a quadratic form with remainders, as long as Assumption~\ref{asm:J} holds.
This is the main step toward extracting the part from $\Lambda_n$ which behaves asymptotically as a $\chi^2$ random variable.
\vspace{1mm}
\begin{lemma}\label{Lemma_Taylor_NP}
If Assumption~\ref{asm:J} holds true, there exist $\alpha\in\mathbb{R}$ and $\pmb{\alpha'}\in\mathbb{R}^r$ such that the Neyman--Pearson criterion may be formulated as 
\begin{align}
 \Lambda_n &=\,\tnv^T \, \jnm^{-1} \, \tnv- \left(\frac{\normt{\lnv}^2}{\sqrt{n}}\, \bar{G}_n \right)^{\!2} {\pmb{\alpha'}}^T \jnm^{-1}\pmb{\alpha'} +\frac{\alpha\normt{\lnv}^3}{\sqrt{n}}\, \bar{G}_n\,,
 \label{eq:Log_diff_exact}
\end{align}
where $\abs{\alpha}\leq\frac{r^{3/2}}{3}$, $\abs{\smash{\alpha'_j}}\leq \frac{r}{2}$ for $j\in[1:r]\,$,
\begin{align}
\bar{G}_n &\triangleq \frac{1}{n}\sum_{i=1}^{n}G(X_i)\,, \label{eq:Gn_def}\\
G(X_i) &\triangleq \sup_{\thet'} \abs{g'''_{uvw}(X_i;\thet')}\,, \label{eq:G_def}
\end{align}
and $\thet'$ is on the line connecting $\thets$ and $\thetz$.
\end{lemma}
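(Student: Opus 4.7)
The plan is to combine two Taylor expansions around $\thetz$: a third-order expansion of $L_n(\thets)$, which produces the familiar quadratic form $2\tnv^T\lnv-\lnv^T\jnm\lnv$ plus a cubic remainder, and a second-order expansion of the optimality condition $\nabla L_n(\thets)=\mathbf{0}$, which provides a near-linear relation between $\tnv$ and $\lnv$ with a quadratic remainder. Substituting the latter into the former will collapse the quadratic form into $\tnv^T\jnm^{-1}\tnv$ minus a correction whose size is governed by the quadratic remainder; together with the cubic remainder from the first expansion, this yields the announced decomposition.

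First, I would Taylor-expand $L_n(\thets)$ about $\thetz$ with a Lagrange remainder of order three. Using the identifications $\nabla L_n(\thetz)=\sqrt{n}\,\tnv$ and $\nabla^2 L_n(\thetz)=-n\,\jnm$ together with $\lnv=\sqrt{n}(\thets-\thetz)$, this gives
\begin{equation*}
\Lambda_n=2\tnv^T\lnv-\lnv^T\jnm\lnv+\frac{1}{3}\sum_{u,v,w}L_n'''_{uvw}(\thet'')(\thets-\thetz)_u(\thets-\thetz)_v(\thets-\thetz)_w
\end{equation*}
for some $\thet''$ on the segment between $\thets$ and $\thetz$. Bounding $|g'''_{uvw}(X_i;\thet'')|\leq G(X_i)$, replacing the $\ell_1$ norm of $\thets-\thetz$ by $\sqrt{r}$ times its $\ell_2$ norm via Cauchy--Schwarz, and factoring out $\normt{\lnv}^3/\sqrt{n}$ and $\bar{G}_n$ identifies the cubic term as $\alpha\,\bar{G}_n\normt{\lnv}^3/\sqrt{n}$ with $|\alpha|\leq r^{3/2}/3$.

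Next, I would apply a componentwise second-order Taylor expansion with Lagrange remainder to each entry of $\nabla L_n(\thets)=\mathbf{0}$. Writing the remainder in vector form, this yields
\begin{equation*}
\tnv=\jnm\lnv-\frac{\pmb{\beta}}{2\sqrt{n}},\qquad \beta_u\triangleq\sum_{v,w}\bar{g}'''_{n,uvw}(\thet'_u)\,l_{n,v}\,l_{n,w},
\end{equation*}
where Assumption~\ref{asm:J} is used to invoke $\jnm^{-1}$ below and where $\thet'_u$ is on the segment between $\thets$ and $\thetz$ (possibly depending on $u$). Inverting for $\lnv=\jnm^{-1}(\tnv+\pmb{\beta}/(2\sqrt{n}))$ and substituting into the quadratic part of the first step, the cross terms cancel and
\begin{equation*}
2\tnv^T\lnv-\lnv^T\jnm\lnv=\tnv^T\jnm^{-1}\tnv-\frac{1}{4n}\pmb{\beta}^T\jnm^{-1}\pmb{\beta}.
\end{equation*}
This is the crucial algebraic step; it works precisely because the optimality condition ties $\tnv$ and $\lnv$ together, so the leading asymptotic behavior in $\lnv$ coincides with the inverse Fisher direction.

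Finally, I would match the correction $\pmb{\beta}^T\jnm^{-1}\pmb{\beta}/(4n)$ with the target form by setting $\pmb{\alpha'}\triangleq\pmb{\beta}/(2\bar{G}_n\normt{\lnv}^2)$, which gives exactly $(\normt{\lnv}^2\bar{G}_n/\sqrt{n})^2\pmb{\alpha'}^T\jnm^{-1}\pmb{\alpha'}$. The bound $|\alpha'_u|\leq r/2$ follows from $|\bar{g}'''_{n,uvw}(\thet'_u)|\leq \bar{G}_n$ and $\|\lnv\|_1^2\leq r\normt{\lnv}^2$. The main obstacle is purely bookkeeping: one must carefully track that the two Taylor expansions live on possibly different intermediate points yet share the uniform envelope $G(\cdot)$, and one must verify the vanishing of the cross term $\tnv^T\jnm^{-1}\pmb{\beta}/\sqrt{n}$ so that only the pure $\pmb{\beta}$-quadratic survives.
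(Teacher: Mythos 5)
Your proposal is correct and follows essentially the same route as the paper: a third-order Taylor expansion of $L_n$ for the cubic remainder $\alpha$, a second-order expansion of the score equation $\nabla L_n(\thets)=\mathbf{0}$ for the quadratic remainder $\pmb{\alpha'}$, and substitution of the latter into the former (the paper reaches the same cancellation by multiplying the score relation by $\tnv^T\jnm^{-1}$ and by $\lnv^T$ and subtracting, which is algebraically equivalent to your direct substitution $\lnv=\jnm^{-1}(\tnv+\pmb{\beta}/(2\sqrt{n}))$). The Cauchy--Schwarz bookkeeping giving $\abs{\alpha}\leq r^{3/2}/3$ and $\abs{\smash{\alpha'_j}}\leq r/2$ also matches the paper's argument.
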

\begin{proof}
The derivations in this lemma are similar to the ones found in~\cite[Ch. 2]{billingsley1961statistical} for the case of a first order Markov process; in this work, the samples come from an i.i.d. process.
The proof is deferred to %\cite[App. B]{sina2018LLR}. 
\ref{Appendix:Lemma_Taylor_NP_proof}.
\end{proof}
\vspace{1mm}

Lemma~\ref{Lemma_Taylor_NP} shows that, as $n\to\infty$, the behavior of the LLR $\Lambda_n$ is dominated by the first term on the r.h.s. of~\eqref{eq:Log_diff_exact}, i.e., $\tnv^T \, \jnm^{-1} \, \tnv$. In the next subsection, we see that this term behaves as a random variable with a $\chi^2$ distribution.

% -----------------------------------------------------------------------------
%********--------------------------- %
%********-----   Asymptotic   ------ %
%********--------------------------- %
\subsection{Asymptotic Behavior of \texorpdfstring{$\Lambda_n$}{the LLR}}

The asymptotic behavior of the ML estimate and the LLR $\Lambda_n$ has been extensively studied (see e.g., \cite{Owen2001book, Owen1988empirical,billingsley1961statistical}). We briefly review these results in the following.

We may see in the definition of $\tnv$ in~\eqref{eq:tn_def} that every summand is a zero-mean random vector; this follows from the fact that
\begin{align}
\bE{g'_u(X_i;\thetz)}=\theta^0_u\,\frac{1}{\theta^0_u}-\thetres^0\,\frac{1}{\thetres^0}=0\,,
\end{align} 
for $u\in[1:r]$.
Also, for any $n$, the covariance matrix of $\tnv$ is equal to the Fisher information matrix, i.e.,
\begin{align}
\Cov{\tnv}&=\frac{1}{n}\sum_{i=1}^{n} \Cov{\nabla g(X_i;\thetz)}=\BFmat\,, \label{eq:tn_var}
\end{align}
where the first equality holds because the samples are i.i.d. and the summands are zero-mean, while the second equality is due to~\eqref{eq:Fisher_def}.
Consequently, invoking the classical central limit theorem (CLT)~\cite[Th. 2.4.1]{lehmann2004elements} for i.i.d. samples, 
\begin{align}
\tnv\stackrel{\mathcal{L}}{\sim} \mathcal{N}(\pmb{0},\BFmat)\, . \label{tn_gauss}
\end{align}
Also, by the weak law of large numbers and~\eqref{eq:Fisher_def}, asymptotically $\jnm$ converges in probability to $\BFmat$ --the Fisher information matrix about the true parameter $\thetz$.
Then, it can be shown that (see \cite{Wilks1938,Owen2001book, Owen1988empirical}):
\begin{equation}
 \Lambda_n\stackrel{\mathcal{L}}{\sim}\chi^2_{r}\,,
\end{equation}
since $ \tnv^T \, \jnm^{-1} \, \tnv\stackrel{\mathcal{L}}{\sim}\chi^2_{r}$, i.e., the $\chi^2$ distribution with $r$ degrees of freedom, and $\text{p}\lim\limits_{n\to\infty} (\Lambda_n - \tnv^T \, \jnm^{-1} \, \tnv )=0$; this is known as Wilks' theorem.

For finite values of $n$, $\Lambda_n$ is not necessarily distributed as a $\chi^2$ random variable. In the next section, we show a non-asymptotic bound on the difference between the \emph{cdf} of $\Lambda_n$ and a $\chi^2_r$ random variable, where $r$ is the number of free parameters in our model.

% ==========================================================
%===========================================================
%===========================================================
%======================== Main Results =====================
%===========================================================
%===========================================================
\section{Main Results}
\label{Sec:main}
In this section, we present an explicit bound for the \emph{cdf} of $\Lambda_n$ for any value of $n$. Let us define
\begin{equation}
 P^*_n \triangleq \PR{\Lambda_n<a} .
\end{equation}
Then, using Lemma~\ref{Lemma_Taylor_NP}, we may write
\begin{align}
 P^*_n= \PR{ \tnv^T \, \jnm^{-1} \, \tnv- \smash{\left(\frac{\normt{\lnv}^2}{\sqrt{n}}\, \bar{G}_n \right)^{\!2}} {\pmb{\alpha'}}^T \jnm^{-1}\pmb{\alpha'} +\frac{\alpha\normt{\lnv}^3}{\sqrt{n}}\, \bar{G}_n<a } . \label{eq:P_def}
\end{align}

As in the asymptotic case, for large\footnote{We need $n$ to be large enough so that Assumption~\ref{asm:J} holds true.} but finite $n$, the behavior of the argument of~\eqref{eq:P_def} is dominated by the first quadratic term, where $\jnm$ is close to the Fisher information matrix $\BFmat$. Moreover, the effect of the remaining two terms in the argument of~\eqref{eq:P_def} is accounted as a change in the threshold $a$; thus, loosely the $P^*_n$ becomes
\begin{equation}
\PR{ \tnv^T \, \BFmat^{-1} \, \tnv <a+\epsilon_n}.
\end{equation}
However, the following theorem establishes an explicit uniform bound on the behavior of $P^*_n$. %for any sample size $n$. 

\setlength{\abovetopsep}{1ex}
\begin{table}[t]
\caption{Parameters of Theorem~\ref{Theorem_P_star_Bound}}
\label{Table_param}
\centering
\begin{tabular}{c}
\toprule
\begingroup
\addtolength{\jot}{1mm}
$\begin{aligned} \vphantom{\frac{1}{2}^{{2}}} % HACK: add a little of vertical space
 \Delta(n)           &\,=\, \delta_s a +\frac{n\,r^3\delta^2/[1-\delta']}{\big(\thetmin-\sqrt{r\,\delta}\big)^6} +\,\frac{2\,n\,r^\frac{3}{2} \delta^{\frac{3}{2}}}{3\big(\thetmin-\sqrt{r\,\delta}\big)^3} \\
 h(\thetz)           &\,=\, 400\,r^{\frac{1}{4}} \Big[ (\thetres^0)^{-\frac{1}{2}} (1-\thetres^0)^{\frac{3}{2}} + \sum\nolimits_{j=1}^{r} (\theta^0_j)^{-\frac{1}{2}}  (1-\theta^0_j)^{\frac{3}{2}} \Big] \\
 \epsilon(n,\delta') &\,=\, 2r \exp\!\smash{\bigg(}-\frac{1}{2}\,\delta'^2\,{\Big( \omega + \,\delta' \frac{\nu}{3} \Big)}^{-1} \,n \smash{\bigg)} \\
 \omega              &\,=\, \thetmin^{-3} \big(1-\thetmin(r-1)^2 + r^2 \big) \\
 \nu                 &\,=\, \max\Big\{\big(r+1\big)\,\thetmin^{-1}, \,r\big(\thetmin^{-2}-\thetmin^{-1}\big)-1\,,\thetmin^{-2}-\thetmin^{-1}\Big\} \\
\end{aligned}$
\endgroup
\\
\bottomrule\\
\end{tabular}

Note: Only the dependence with respect to some parameters is made explicit to simplify notation.
\end{table}

\vspace{1mm}
\begin{theorem}\label{Theorem_P_star_Bound}
For any choice of $0<\delta<\frac{\thetmin^2}{r}$ and $0<\delta'<1$, if Assumptions~\ref{asm:nonzero} and~\ref{asm:J} hold true, then the following bound holds for $P^*_n$:
\begin{align}
F\!\left(r,a-\frac{\Delta(n)}{1+\delta_s}\right)- \mu \,\leq\, P^*_n \,\leq\, F\!\left(r,a+\frac{\Delta(n)}{1-\delta_s}\right)+ \mu\,, \label{eq:P_bound}
\end{align}
where $F(r,a)$ is the \emph{cdf} of a $\chi^2_r$ random variable at point $a$, $\delta_s \triangleq \frac{\delta'}{1-\delta'}$, 
\begin{align}
\mu \triangleq \epsilon(n,\delta')  +2r\exp\left(-\frac{2\,n\delta}{r}\right) +\frac{h(\thetz)}{\sqrt{n}}\,, \label{eq:mu}
\end{align}
and the rest of the parameters used in~\eqref{eq:P_bound} and~\eqref{eq:mu} are defined in Table~\ref{Table_param}.
\end{theorem}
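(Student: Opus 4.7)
My strategy has three layers. First, Lemma~\ref{Lemma_Taylor_NP} is invoked to decompose $\Lambda_n$ as the quadratic form $\tnv^T \jnm^{-1} \tnv$ plus two explicit remainders. Second, the random Gram $\jnm^{-1}$ is replaced by the deterministic Fisher inverse $\BFmat^{-1}$ using the matrix Bernstein inequality on the i.i.d.\ sum in~\eqref{eq:j_def}. Third, the \emph{cdf} of $\tnv^T \BFmat^{-1} \tnv$ is compared with $F(r,\cdot)$ via the explicit Berry--Esseen-type bound of Bentkus~\cite{bentkus2003dependence} for quadratic forms in standardized i.i.d.\ sums.

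Concretely, I work on the intersection of the two high-probability events
\begin{align*}
\event{1} &\triangleq \big\{\normt{\lnv}^2 \leq n\delta\big\}, \\
\event{2} &\triangleq \big\{\mtnorm{\BFmat^{-1/2}(\jnm-\BFmat)\BFmat^{-1/2}} \leq \delta'\big\}.
\end{align*}
Lemma~\ref{Lemma_ln_bound} gives $\PR{\event{1}^c} \leq 2r\exp(-2n\delta/r)$, while matrix Bernstein applied to the mean-zero i.i.d.\ sum $\BFmat^{-1/2}(\jnm-\BFmat)\BFmat^{-1/2}$ yields $\PR{\event{2}^c} \leq \epsilon(n,\delta')$; the explicit range $\nu$ and variance proxy $\omega$ of Table~\ref{Table_param} are read off from the closed forms~\eqref{eq:sigma_matrix} and~\eqref{eq:sigma_inv}. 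On $\event{1}$, every coordinate of the point $\thet'$ on the segment $[\thetz,\thets]$ appearing in Lemma~\ref{Lemma_Taylor_NP} stays above $\thetmin-\sqrt{r\delta}$, so the third log-derivatives defining $G(X_i)$ in~\eqref{eq:G_def} are deterministically controlled by $(\thetmin-\sqrt{r\delta})^{-3}$. Combining this with the coefficient bounds $|\alpha| \leq r^{3/2}/3$ and $|\alpha'_j| \leq r/2$, together with $\normt{\lnv}^2 \leq n\delta$ and $\mtnorm{\jnm^{-1}} \leq (1-\delta')^{-1}\mtnorm{\BFmat^{-1}}$ (valid on $\event{2}$), bounds the two remainder terms of~\eqref{eq:Log_diff_exact} in absolute value by the last two summands of $\Delta(n)$.

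For the replacement step, on $\event{2}$ I write $\jnm^{-1} = \BFmat^{-1/2}(\eye+E)^{-1}\BFmat^{-1/2}$ with $\mtnorm{E} \leq \delta'$; a Neumann series yields $\mtnorm{(\eye+E)^{-1}-\eye} \leq \delta'/(1-\delta') = \delta_s$, hence
\[
(1-\delta_s)\,\tnv^T \BFmat^{-1} \tnv \,\leq\, \tnv^T \jnm^{-1} \tnv \,\leq\, (1+\delta_s)\,\tnv^T \BFmat^{-1} \tnv .
\]
Feeding this sandwich into Lemma~\ref{Lemma_Taylor_NP} together with the remainder bound just derived, the event $\{\Lambda_n<a\}\cap\event{1}\cap\event{2}$ is contained in $\{\tnv^T \BFmat^{-1} \tnv < a + \Delta(n)/(1-\delta_s)\}$, the leading $\delta_s a$ summand of $\Delta(n)$ absorbing the multiplicative error on the main quadratic; an analogous one-sided inclusion handles the lower bound. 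Finally, since $\BFmat^{-1/2}\tnv = n^{-1/2}\sum_{i=1}^n Z_i$ with $\bE{Z_i}=\mathbf{0}$ and $\Cov{Z_i}=\eye$, Bentkus' bound gives $\sup_t |\PR{\tnv^T \BFmat^{-1} \tnv \leq t} - F(r,t)| \leq h(\thetz)/\sqrt{n}$, where the closed form of $h(\thetz)$ follows by computing $\bE{\normt{Z_1}^3}$ directly from~\eqref{eq:model} and~\eqref{eq:sigma_inv}. A union bound over $\event{1}^c$, $\event{2}^c$ and the Bentkus error yields $\mu$ and closes~\eqref{eq:P_bound}. The main obstacle I anticipate is the bookkeeping in Step~2: absorbing the multiplicative $\delta_s$-error into the threshold shift $\Delta(n)/(1\mp\delta_s)$, reproducing the three pieces of $\Delta(n)$ with the exact constants of Table~\ref{Table_param}, and carrying through the closed-form Bernstein parameters $\omega,\nu$ from the discrete model; the Bernstein and Bentkus invocations themselves are essentially black-box.
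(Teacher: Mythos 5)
Your proposal is correct and follows essentially the same route as the paper: the Taylor/quadratic decomposition of Lemma~\ref{Lemma_Taylor_NP}, a Neumann-series sandwich of $\tnv^T\jnm^{-1}\tnv$ between $(1\mp\delta_s)\,\tnv^T\BFmat^{-1}\tnv$, remainder control via Hoeffding on $\lnv$ together with the third-derivative bound of Lemma~\ref{Lemma_Bounded_sum_G}, matrix Bernstein for the information-matrix perturbation, and Bentkus' bound for the final $\chi^2$ approximation. The one small divergence is that you apply Bernstein to the conjugated matrix $\BFmat^{-1/2}(\jnm-\BFmat)\BFmat^{-1/2}$, whereas the Table~\ref{Table_param} constants $\omega$ and $\nu$ are computed for the raw summands of $\rnm=\jnm-\BFmat$ and then transferred via $\mtnorm{\BFmat^{-1/2}\rnm\BFmat^{-1/2}}\leq\mtnorm{\rnm}$ (Lemma~\ref{Lemma_eig_B}, using $\lammin(\BFmat)\geq 1$), so to land on exactly those parameters you should route the concentration step through $\rnm$ as the paper does.
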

\begin{proof}
 See Section~\ref{sec:proof}.
\end{proof}
\vspace{1mm}

Before proceeding with the proof of the Theorem, we show next that this bound recovers the already known asymptotic behavior of the log-likelihood ratio~\cite{Owen1988empirical}.

\vspace{1mm}
\begin{corollary}\label{corollary_compact}
For sufficiently large $n$ such that $\Delta(n)<a(1+\delta_s)$, a more compact, albeit looser, representation of the bound in Theorem~\ref{Theorem_P_star_Bound} is given as
\begin{align}
\abs{P^*_n- F(r,a)}\leq \min\{\mu'_r,1\}\,, \label{eq:P_bound_compact}
\end{align}
where for $r>1$
\begin{equation}
\mu'_r \triangleq \mu+ \frac{\Delta(n)}{2(1-\delta_s)} \left( \frac{a}{2}+\frac{\Delta(n)}{2(1-\delta_s)} \right)^{\frac{r}{2}-1}\,, \label{eq:P_bound_compact_mu}
\end{equation}
and for $r=1$
\begin{equation}
\mu'_1 \triangleq \mu+ \frac{\Delta(n)}{2} \max \bigg\{ \frac{\left(\frac{a}{2}\right)^{-\frac{1}{2}}}{(1-\delta_s)},\, \frac{1}{(1+\delta_s)} \left(\frac{a}{2}-\frac{\Delta(n)}{2(1+\delta_s)}\right)^{-\frac{1}{2}} \bigg\}  \,.
\label{eq:P_bound_compact_mu2}
\end{equation}
\end{corollary}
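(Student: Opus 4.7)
The plan is to convert the asymmetric two-sided bound~\eqref{eq:P_bound} of Theorem~\ref{Theorem_P_star_Bound} into a single symmetric bound centered at $F(r,a)$, by quantifying how much the $\chi^2_r$ cdf moves when its argument is perturbed by $\pm\Delta(n)/(1\mp\delta_s)$. First, I would subtract $F(r,a)$ from all three sides of~\eqref{eq:P_bound}; using the monotonicity of $F(r,\cdot)$, this yields $|P^*_n-F(r,a)| \leq \mu+\max\{A,B\}$ with $A\triangleq F(r,a+\Delta(n)/(1-\delta_s))-F(r,a)$ and $B\triangleq F(r,a)-F(r,a-\Delta(n)/(1+\delta_s))$. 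The extra hypothesis $\Delta(n)<a(1+\delta_s)$ keeps the argument of $F$ in $B$ strictly positive, so both $A$ and $B$ are well-defined.

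Second, I would apply the mean value theorem to $F(r,\cdot)$, whose derivative is the $\chi^2_r$ density $f(r,x)=x^{r/2-1}e^{-x/2}/[2^{r/2}\,\Gamma(r/2)]$. This expresses $A$ and $B$ as products of interval lengths and density values at intermediate points, each of which is controlled by the supremum of $f(r,\cdot)$ over the corresponding interval.

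Third, I would bound these density suprema depending on $r$. For $r>1$, using $e^{-x/2}\leq 1$ together with the monotonicity of $x^{r/2-1}$ (constant for $r=2$, non-decreasing for $r\geq 3$), the supremum is attained at the rightmost endpoint $a+\Delta(n)/(1-\delta_s)$; after rewriting $x^{r/2-1}/2^{r/2}$ as $(x/2)^{r/2-1}/2$ and absorbing the $\Gamma(r/2)$ factor, this produces the term $\tfrac{1}{2}\bigl((a+\Delta(n)/(1-\delta_s))/2\bigr)^{r/2-1}$ appearing in~\eqref{eq:P_bound_compact_mu}. For $r=1$, the density $f(1,x)=(2\pi x)^{-1/2}e^{-x/2}$ is strictly decreasing on $(0,\infty)$, so the supremum on each of the two intervals is attained at its \emph{left} endpoint; taking the maximum of the two resulting expressions reproduces~\eqref{eq:P_bound_compact_mu2}. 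The outer $\min\{\cdot,1\}$ in~\eqref{eq:P_bound_compact} simply reflects the trivial bound $|P^*_n-F(r,a)|\leq 1$.

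The main obstacle will be the density estimate in the third step: obtaining the coefficient $1/2$ in~\eqref{eq:P_bound_compact_mu} — rather than the $1/[2^{r/2}\Gamma(r/2)]$ produced naively by the mean value theorem — requires carefully reshaping $x^{r/2-1}/2^{r/2}$ into $(x/2)^{r/2-1}/2$ and then absorbing the gamma factor, which is clean for even $r$ but slightly lossy for odd $r\geq 3$. The separate treatment of $r=1$ versus $r>1$ is unavoidable because the $\chi^2_r$ density transitions from strictly decreasing to unimodal as $r$ crosses $2$, which shifts where the supremum lies within each subinterval; once these density estimates are in place, the rest of the corollary follows by routine substitution.
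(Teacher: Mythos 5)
Your proposal is correct and follows essentially the same route as the paper: the mean value theorem applied to $F(r,\cdot)$ on each side of~\eqref{eq:P_bound}, the density bounded by dropping $e^{-x/2}$ and using monotonicity of $x^{r/2-1}$ (supremum at the right endpoint for $r>1$, at the left endpoint for $r=1$ since the density is then decreasing), the max of the two resulting perturbations, and the trivial cap at $1$. Your explicit attention to the $1/[2^{r/2}\Gamma(r/2)]$ normalization is in fact more careful than the paper, which writes the derivative without the $\Gamma(r/2)$ factor and thereby silently skips the one case ($r=3$, where $\Gamma(3/2)<1$) in which absorbing that factor is not immediate.
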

\begin{proof}
The Taylor expansion of the function $F(\cdot,\cdot)$ with respect to the second component is expressed using the mean value theorem as
\begin{equation}
F(r, a+\delta_m) = F(r,a) +\frac{\delta_m}{2} \Big(\frac{\tilde{a}_+}{2}\Big)^{\frac{r}{2}-1} e^{-\frac{\tilde{a}_+}{2}}\,, \label{eq:Taylor_PG}
\end{equation}
for $\delta_m>0$ and $a\leq \tilde{a}_+\leq a+\delta_m$. Since $a\geq0$, for $r>1$ we may bound~\eqref{eq:Taylor_PG} from above as follows
\begin{equation}
F(r, a+\delta_m) \leq F(r,a) +\frac{\delta_m}{2} \Big(\frac{a+\delta_m}{2}\Big)^{\frac{r}{2}-1}\,, \label{eq:pG_del}
\end{equation}
whereas for $r=1$ (binary case) we obtain
\begin{equation}
F(1, a+\delta_m) \leq F(1,a) +\frac{\delta_m}{2} \Big(\frac{a}{2}\Big)^{-\frac{1}{2}}\,. \label{eq:pG_del2}
\end{equation}
The upper bound in~\eqref{eq:P_bound} may thus be relaxed using~\eqref{eq:pG_del} or~\eqref{eq:pG_del2}.

On the other hand, again by the mean value theorem,
\begin{equation}
F(r, a-\delta_m) = F(r,a) -\frac{\delta_m}{2} \Big(\frac{\tilde{a}_-}{2}\Big)^{\frac{r}{2}-1} e^{-\frac{\tilde{a}_-}{2}}\,, \label{eq:Taylor_PG2}
\end{equation}
for $a>\delta_m>0$ and $a-\delta_m\leq \tilde{a}_-\leq a$. Hence for $r>1$ we derive from \eqref{eq:Taylor_PG2} that
\begin{align}
F(r, a-\delta_m) \geq F(r,a) -\frac{\delta_m}{2} \Big(\frac{a}{2}\Big)^{\frac{r}{2}-1}\,, \label{eq:pG_del_lower}
\end{align} 
while for $r=1$
\begin{align}
F(1, a-\delta_m) \geq F(1,a) -\frac{\delta_m}{2} \Big(\frac{a-\delta_m}{2}\Big)^{-\frac{1}{2}}\,. \label{eq:pG_del_lower2}
\end{align} 
As a result, the bounds \eqref{eq:pG_del_lower} and \eqref{eq:pG_del_lower2} can relax the lower bound in~\eqref{eq:P_bound}.

To obtain the compact bound~\eqref{eq:P_bound_compact}, the tighter bound~\eqref{eq:pG_del_lower} for $r>1$ is omitted.
Finally, since for small values of $n$ the quantity $\mu'_r$ might be large, we may trivially bound the difference of two \emph{cdf} with $1$, which completes the proof.
\end{proof}

\vspace{1mm}
\begin{proposition}{(High dimensional analysis)}\label{Prop:highDimension}
If the dimension $r$ is allowed to grow with respect to $n$ such that $\frac{r^6}{n}\to 0$, the LLR $\Lambda_n$ is asymptotically distributed as a $\chi^2_r$ random variable, i.e.,
$$\Lambda_n\stackrel{\mathcal{L}}{\sim}\chi^2_{r}\,.$$
\end{proposition}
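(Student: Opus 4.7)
The plan is to invoke Theorem~\ref{Theorem_P_star_Bound} with the auxiliary parameters $\delta$ and $\delta'$ now taken as sequences in $n$ (and $r$) tuned so that the non-asymptotic envelope collapses onto $F(r,a)$ when $r^6/n\to 0$. By the triangle inequality, Theorem~\ref{Theorem_P_star_Bound} yields
\begin{equation*}
|P^*_n - F(r,a)| \,\leq\, \mu \,+\, \max\!\left\{F\!\left(r,a+\tfrac{\Delta(n)}{1-\delta_s}\right) - F(r,a),\; F(r,a) - F\!\left(r,a-\tfrac{\Delta(n)}{1+\delta_s}\right)\right\},
\end{equation*}
so it suffices to show that $\mu\to 0$ and that the two $\chi^2_r$-CDF increments vanish. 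The latter I would control via the mean-value theorem and the uniform density bound $\sup_{x\geq 0} f_r(x)\leq 1/2$, which holds for every $r\geq 2$ (the case where $r$ stays bounded already reduces to classical Wilks). Each increment is then at most $\Delta(n)/(2(1-\delta_s))$, so it remains to prove $\Delta(n)\to 0$, $\delta_s\to 0$ and $\mu\to 0$ simultaneously.

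Concretely, I would pick $\delta' = 1/\log n$, so that $\delta_s = 1/(\log n-1)\to 0$, and $\delta = C\,r\log(rn)/n$ with a constant $C>1$, so that $2r\exp(-2n\delta/r) = 2r(rn)^{-2C}\to 0$. Since $\sqrt{r\delta} = O(r\sqrt{\log n/n}) = o(1)$ under $r^6/n\to 0$, the denominators $(\thetmin - \sqrt{r\delta})^3$ and $(\thetmin - \sqrt{r\delta})^6$ stay bounded away from zero, assuming $\thetmin$ is bounded below. Inspecting Table~\ref{Table_param} one then has $\omega=O(r^2)$ and $\nu=O(r)$, hence
\begin{equation*}
\epsilon(n,\delta') \,\leq\, 2r\exp\!\left(-c\,n/(r^2\log^2 n)\right)\,\to\,0,
\end{equation*}
because $n/r^2$ grows polynomially in $n$ under $r^6/n\to 0$. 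Next, $h(\thetz)/\sqrt n = O(r^{5/4}/\sqrt n)\to 0$ since $r^{5/2}/n\to 0$. Finally
\begin{equation*}
\Delta(n) \,=\, \delta_s\, a + O\!\left(\tfrac{r^5\log^2 n}{n}\right) + O\!\left(\tfrac{r^3\log^{3/2}n}{\sqrt n}\right),
\end{equation*}
and each summand vanishes under $r^6/n\to 0$.

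The hard part is the simultaneous calibration of $\delta$. The Hoeffding-type tail $2r\exp(-2n\delta/r)$ demands $\delta \gtrsim r\log r/n$, whereas the cubic remainder $nr^{3/2}\delta^{3/2}$ inside $\Delta(n)$ demands $\delta \ll (n^{2/3}r)^{-1}$. These two requirements are jointly compatible precisely when $r^3/\sqrt n\to 0$, i.e., $r^6/n\to 0$, which is the origin of the exponent $6$ in the proposition. Once $\delta$ is placed in this window, every remaining piece of the bound (the matrix-Bernstein term $\epsilon(n,\delta')$, the quadratic remainder $nr^3\delta^2$, and $h(\thetz)/\sqrt n$) imposes only a strictly weaker growth constraint and is therefore automatically negligible.
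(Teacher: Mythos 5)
Your proposal is correct and follows essentially the same route as the paper's own proof: both invoke Theorem~\ref{Theorem_P_star_Bound} with $\delta$ and $\delta'$ tuned as vanishing sequences, verify that every term of $\mu$ and $\Delta(n)$ goes to zero, and locate the exponent $6$ in exactly the same trade-off between the Hoeffding tail (forcing $\delta\gtrsim r\log r/n$) and the cubic remainder (forcing $n\,r^{3/2}\delta^{3/2}\to 0$). The differences are cosmetic rather than structural --- you use concrete sequences ($\delta'=1/\log n$, $\delta=Cr\log(rn)/n$) and a uniform $\chi^2_r$-density bound for the CDF increment where the paper parametrizes $r=n^\zeta$, $\delta=\mathcal{O}(n^{c'-1})$ and reads off the feasible exponent region; your explicit choice leaves harmless logarithmic factors at the boundary (strictly you prove convergence under $r^6\log^3 n/n\to 0$), but the paper's own argument shares this looseness since it only treats $r=n^\zeta$ with $\zeta<1/6$.
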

\begin{proof} 
Assume $r=n^\zeta$, then $\omega=\mathcal{O}(n^{2\zeta})$ and $\nu=\mathcal{O}(n^\zeta)$ for some $\zeta>0$.
By choosing $\zeta<c<\frac{1}{2}$ and $\delta'=n^{-\frac{1}{2}+c}$ with $n$ asymptotically large,
given the definitions in Table~\ref{Table_param}, the parameter $\epsilon(n,\delta')$ is of order \begin{align}
\epsilon(n,\delta')&=\mathcal{O}\left(\exp(-n^{2c-\max\{2\zeta\,,\,c+\zeta-\frac{1}{2}\} })\right)\nonumber\\ 
&=\mathcal{O}\left(\exp(-n^{2c-2\zeta })\right),
\end{align}
which decays exponentially fast.

Furthermore, let $\delta= \mathcal{O}(n^{c'-1})$ such that the condition of Theorem~\ref{Theorem_P_star_Bound} holds.
Then, we may see that if $\zeta<c'$, the second term of \eqref{eq:mu} also converges exponentially fast:
\begin{align}
2r\exp\left(-\frac{2\,n\delta}{r}\right)=\mathcal{O}\left(\exp(-n^{c'-\zeta})\right).
\end{align}
The last term in \eqref{eq:mu} is of order $\mathcal{O}(n^{-\frac{1}{2}+\frac{\zeta}{4}})$ and is the dominant term in the parameter $\mu$. Now to verify that $\Delta(n)\to 0$ we have:
\begin{align}
\Delta(n) &=\mathcal{O}(n^{-\frac{1}{2} + c}) + \mathcal{O}(n^{3\zeta + 2 c'-1}) + \mathcal{O}(n^{\frac{3}{2}\zeta + \frac{3 c'}{2}-\frac{1}{2}}) \,,
\end{align}
which is converging in the area marked in Figure~\ref{fig:area}.
This yields that,
\begin{align}
\abs{P^*_n-  F(r,a)} \leq \mathcal{O}(n^{-\frac{1}{2}+c})\,. \label{eq:P_upper_final}
\end{align}

\begin{figure}[t]
	\centering
	\includegraphics[width=.5\linewidth]{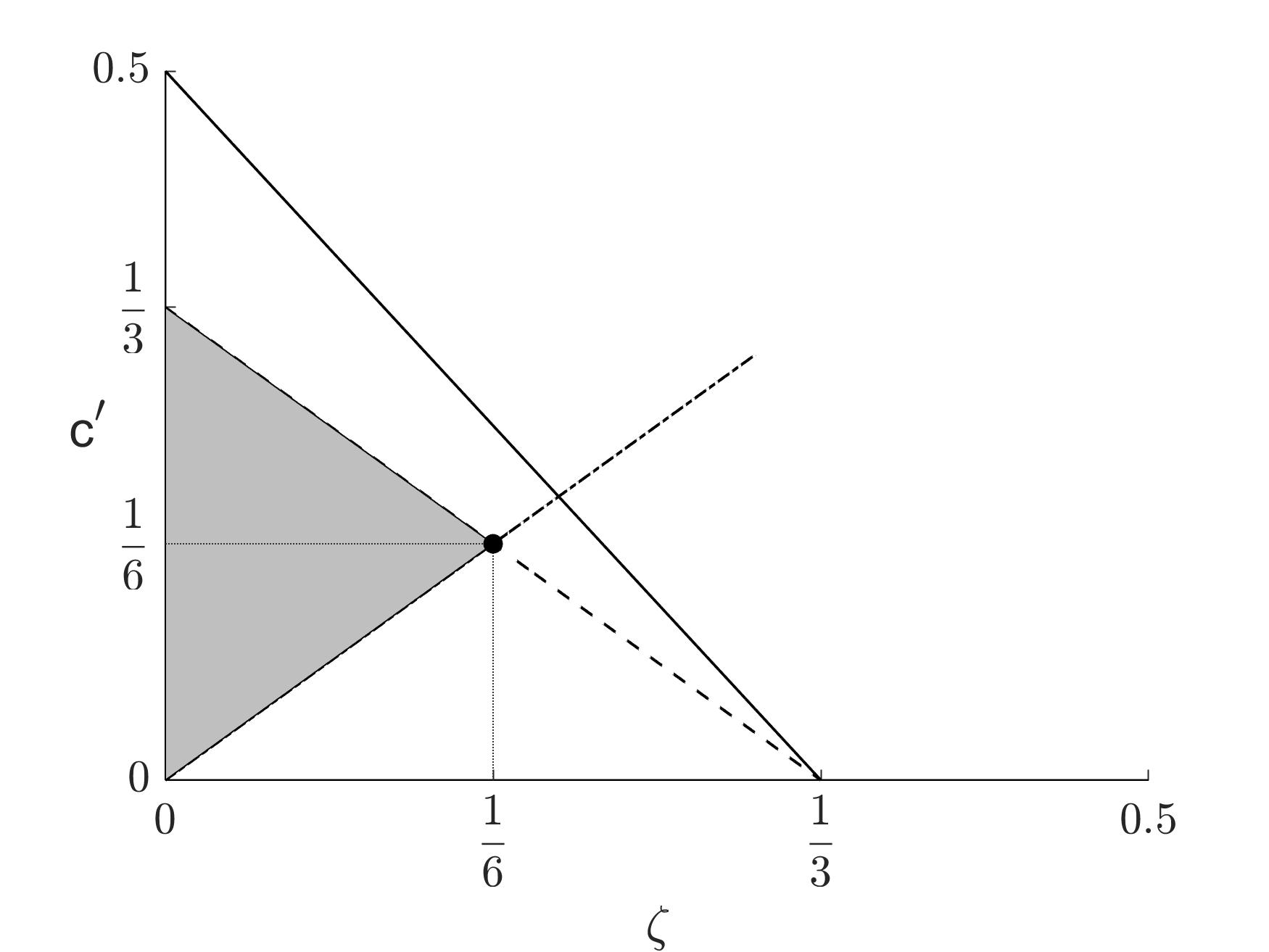}
	\caption{The area where the convergences in high dimension hold by choosing $\delta=\mathcal{O}(n^{c'+1})$ (see Proposition~\ref{Prop:highDimension}). }
	\label{fig:area}
\end{figure}

Given that $c$ may be arbitrarily small, \eqref{eq:P_upper_final} illustrates the known asymptotic behavior called Wilks' phenomena (\cite{Wilks1938,billingsley1961statistical}).
Moreover, by taking the supremum value of $\zeta$ in the marked area in Figure~\ref{fig:area} and since $r=n^\zeta$, the chi-square approximation still holds if $\frac{r^6}{n}\to 0$.
\end{proof}

%===========================================================
%===========================================================
%===========================================================
%======================== Proof of th. =====================
%===========================================================
%===========================================================
\section{Proof of Theorem~\ref{Theorem_P_star_Bound}}
\label{sec:proof}
In the following, we first reformulate the argument of the probability~\eqref{eq:P_def} in order to separate the asymptotic and non-asymptotic terms. We then proceed to bound said terms from below and from above to obtain an upper and a lower bound on $P_n^*$, respectively.

% -----------------------------------------------------------------------------
%--------------------------- %
%------ Preliminaries ------ %
%--------------------------- %
\subsection{Preliminaries}

The argument of~\eqref{eq:P_def} is dominated by its first term; using Lemma~\ref{Lemma_ln_bound}, it is easy to see that the other two terms become negligible for large $n$.
Furthermore, it was previously mentioned that the empirical information matrix $\jnm$, defined in~\eqref{eq:j_def}, tends to the true Fisher information matrix $\BFmat$, defined in~\eqref{eq:Fisher_def}, as the number of samples grows.
However, for a finite $n$, $\jnm$ is likely to differ from $\BFmat$.
Let us define this difference as%
\begin{align}
 \rnm\triangleq \jnm-\BFmat\,.\label{eq:Rn_def}
\end{align}
Given that the argument of $P_n^*$ is a function of the inverse of $\jnm$, let us first write this quantity differently,
% So if $\mtnorm{\sigma^{-1}R_n}<1$, the inverse $\jnm^{-1}$ can be expressed as  
\begin{align}
\jnm^{-1}&=(\eye+\BFmat^{-1}\rnm)^{-1}\BFmat^{-1}\nonumber\\
&\stackrel{(a)}{=} \Bigg(\sum_{k=0}^\infty \left(-\BFmat^{-1}\rnm\right)^{k}\Bigg)\,\BFmat^{-1} \nonumber\\%\label{J_inv_full}\\
&= \BFmat^{-1} -\sum_{k=1}^\infty \left(\BFmat^{-1}\rnm\right)^{2k-1} \BFmat^{-1} + \sum_{k=1}^\infty \left(\BFmat^{-1}\rnm\right)^{2k} \BFmat^{-1}\,,\label{J_inv_appr}
\end{align}
where $(a)$ follows from~\cite[Cor.~5.6.16]{Horn2013} as long as the condition $\mtnorm{\BFmat^{-1}\rnm}<1$ holds true.
This condition would be fulfilled if $\mnorm{\rnm}<1$ as we show in the following lemma. In particular, Lemma~\ref{Lemma_eig_B} will be used later to bound the probability of violating that condition.
%The following lemma will be used later to bound the probability of violating this condition.
% Alternatively, in the following lemma we show that a stronger condition is $\mtnorm{R_n}<1$.

\vspace{1mm}
\begin{lemma}\label{Lemma_eig_B}
Let us define the matrix $\bmat$ as
\begin{equation}
\bmat\triangleq \BFmat^{-\smash{\frac{1}{2}}}\,\rnm\,\BFmat^{-\smash{\frac{1}{2}}}\,. \label{eq:B_def}
\end{equation}
Then, given the definitions of $\BFmat$ and $\rnm$,
\begin{align}
\mtnorm{\BFmat^{-1}\rnm} = \mtnorm{\bmat}\leq \mtnorm{\rnm}. \label{eq:EigB_comp}
\end{align}
\end{lemma}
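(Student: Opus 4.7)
The claim splits into an equality and an inequality, which I would address in turn.

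For the equality $\mtnorm{\BFmat^{-1}\rnm}=\mtnorm{\bmat}$, the plan is to exhibit $\BFmat^{-1}\rnm$ as similar to $\bmat$. Since $\BFmat$ is symmetric positive definite by~\eqref{eq:sigma_matrix}, its symmetric square root $\BFmat^{1/2}$ exists, and a direct manipulation yields
\[
\BFmat^{-1}\rnm \,=\, \BFmat^{-1/2}\big(\BFmat^{-1/2}\rnm\,\BFmat^{-1/2}\big)\BFmat^{1/2} \,=\, \BFmat^{-1/2}\,\bmat\,\BFmat^{1/2},
\]
so the two matrices share the same spectrum. Both $\BFmat$ and $\rnm=\jnm-\BFmat$ are symmetric (the Fisher information matrix and a difference of symmetric Hessian-based matrices), hence $\bmat$ is symmetric with real spectrum, and therefore so is $\BFmat^{-1}\rnm$. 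The paper's definition $\max\{\lammax,-\lammin\}$ therefore applies to both and is identical on them.

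For the inequality $\mtnorm{\bmat}\leq \mtnorm{\rnm}$, I would use that $\bmat$, $\BFmat^{-1/2}$, and $\rnm$ are all symmetric, so the paper's spectral norm coincides on each with the standard operator $2$-norm, which is sub-multiplicative. This gives
\[
\mtnorm{\bmat} \,\leq\, \mtnorm{\BFmat^{-1/2}}^{2}\,\mtnorm{\rnm},
\]
reducing the claim to proving $\mtnorm{\BFmat^{-1/2}}^{2} = \lammax(\BFmat^{-1})\leq 1$. Using the closed-form expression~\eqref{eq:sigma_inv}, for any unit vector $v\in\mathbb{R}^r$
\[
v^{T}\BFmat^{-1}v \,=\, \sum_{j=1}^{r} \theta^0_j\, v_j^{2} - \Big(\sum_{j=1}^{r} \theta^0_j\, v_j\Big)^{\!2} \,\leq\, \max_{j} \theta^0_j \,<\, 1,
\]
where the last strict inequality is guaranteed by Assumption~\ref{asm:nonzero}, which enforces $\thetres^0>0$ and hence $\sum_{j=1}^{r}\theta^0_j<1$.

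The only subtle point is the distinction between the paper's eigenvalue-based spectral norm and the operator $2$-norm on non-symmetric matrices; the similarity argument reduces everything to symmetric matrices, on which the two notions coincide, so this subtlety causes no real difficulty. The rest is routine linear-algebra manipulation.
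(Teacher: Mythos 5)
Your proof is correct and follows essentially the same route as the paper's: the equality via similarity of $\BFmat^{-1}\rnm$ and $\bmat$, and the inequality by reducing to the fact that the extreme eigenvalue of $\BFmat^{\pm 1}$ is controlled by $1$ (the paper shows $\lammin(\BFmat)\geq 1$ directly from~\eqref{eq:sigma_matrix}, you equivalently show $\lammax(\BFmat^{-1})\leq 1$ from~\eqref{eq:sigma_inv}). Your use of submultiplicativity of the operator $2$-norm in place of the paper's explicit change of variables in the quadratic-form maximization is a cosmetic difference; both yield the same intermediate bound $\mtnorm{\bmat}\leq\mtnorm{\rnm}/\lammin(\BFmat)$.
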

\begin{proof}
The equality is due to the matrices being similar, whereas the proof of the inequality is deferred to \ref{Appendix:Lemma_eig_B_proof}.
\end{proof}
\vspace{1mm}

We are now ready to reformulate the argument of~\eqref{eq:P_def}. Consider,
\begin{align}
P^*_n = \PR{A_1 -A_2 + A_3<a }, \label{eq:P_star}
\end{align}
where
\begin{align}
A_1 &\triangleq \,\tnv^T \, \jnm^{-1} \, \tnv \,, &
A_2 &\triangleq \bar{G}_n^2\,\frac{\normt{\lnv}^4}{n} \,{\pmb{\alpha'}}^T\jnm^{-1}\pmb{\alpha'}\,, &
A_3 &\triangleq \alpha\,\bar{G}_n\,\frac{\normt{\lnv}^3}{\sqrt{n}} \,. \label{A_exp_def}
\end{align}
Furthermore, if $\mtnorm{\rnm}<1$, according to Lemma~\ref{Lemma_eig_B}, $A_1$ may be expanded using~\eqref{J_inv_appr} as
\begin{align}
A_1\triangleq A_{11}-A_{12} + A_{13}\,, \label{A_1_exp}
\end{align} 
where
\begin{align}
A_{11} &\triangleq \,\tnv^T\,\BFmat^{-1}\,\tnv \,,\nonumber\\
A_{12} &\triangleq \,\tnv^T\Bigg(\sum_{k=1}^\infty \left(\BFmat^{-1}\rnm\right)^{2k-1} \BFmat^{-1} \Bigg)\,\tnv \,,\nonumber\\
A_{13} &\triangleq \,\tnv^T\Bigg(\sum_{k=1}^\infty \left(\BFmat^{-1}\rnm\right)^{2k} \BFmat^{-1} \Bigg)\,\tnv \,.\label{eq:A1_exp_def}
\end{align}

In order to have a simpler expression for these quantities, let us define
\begin{equation}\label{eq:v_def}
 \pmb{v} \triangleq \BFmat^{-\smash{\frac{1}{2}}}\, \tnv\,,
\end{equation}
which is asymptotically a zero-mean Gaussian random vector with identity covariance matrix. Then, we may rewrite the terms in~\eqref{eq:A1_exp_def} as
\begin{align}
A_{11}&= \pmb{v}^T \pmb{v} \,, &
A_{12}&=\pmb{v}^T\,\sum_{k=1}^\infty \bmat^{2k-1}\,\pmb{v} \,, &
A_{13}&=\pmb{v}^T\,\sum_{k=1}^\infty \bmat^{2k}\,\pmb{v} \,, \label{eq:A1_exp_def2}
\end{align}
where $\bmat$ was defined in~\eqref{eq:B_def}.

In the following, we present an upper and a lower bound on~\eqref{eq:P_star}; we show that the dominating term in the argument has a quadratic form, and thus $P_n^*$ is close to the \emph{cdf} of a $\chi_r^2$ random variable.
The remaining terms are bounded using concentration inequalities for $\normt{\lnv}$ and $\mtnorm{\rnm}$.
% The former was already addressed in Lemma~\ref{Lemma_ln_bound}, and for the latter, we introduce the following lemma.
Accordingly, let us define the events
\begin{align}
\event{l} &\triangleq\{\normt{\lnv}^2>\delta\}\,,\\
\event{R} &\triangleq\{\mtnorm{\rnm}>\delta'\}\,.\label{eq:E_lambda}
\end{align}
Lemma~\ref{Lemma_ln_bound} bounds $\PR{\event{l}}$, and we introduce the following lemma to bound $\PR{\event{R}}$.

\vspace{1mm}
\begin{lemma}
\label{Lemma_lambda_deviation}
For any $n$ and any $\delta'>0$, the following bound holds:
\begin{align}
\PR{\mtnorm{\rnm}>\delta'}\leq \epsilon(n,\delta')\,,
\end{align}
where $\epsilon(n,\delta')$ is defined in Table~\ref{Table_param}.
\end{lemma}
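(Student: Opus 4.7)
The plan is to write $\rnm$ as a normalized sum of i.i.d., zero-mean, self-adjoint random matrices and invoke the (non-commutative) matrix Bernstein inequality. Define $Z_i \triangleq -\nabla^2 g(X_i;\thetz) - \BFmat$ for $i\in[1:n]$, so that $\rnm = \frac{1}{n}\sum_{i=1}^n Z_i$ and $\bE{Z_i} = \mathbf{0}$ by~\eqref{eq:Fisher_def}. The matrix Bernstein inequality then states that, whenever $\mtnorm{Z_i}\leq K$ almost surely and $S_n \triangleq \sum_{i=1}^{n} Z_i$,
\begin{align*}
\PR{\mtnorm{S_n} \geq t} \leq 2r\exp\!\left(\frac{-t^2/2}{n\,\mtnorm{\bE{Z_1^2}} + K\,t/3}\right)\,.
\end{align*}
Taking $t = n\delta'$, this yields the announced bound $\epsilon(n,\delta')$ from Table~\ref{Table_param}, provided I verify that $\nu \geq K$ and $\omega \geq \mtnorm{\bE{Z_1^2}}$.

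First I would write the per-sample Hessian explicitly. From the model~\eqref{eq:model}, the only nonzero entries of $\nabla^2 g(X;\thetz)$ are $g''_{uv}(j;\thetz) = -(\theta_j^0)^{-2}$ when $X=j\in[1:r]$ and $u=v=j$, and $g''_{uv}(r+1;\thetz)=-(\thetres^0)^{-2}$ for all $u,v\in[1:r]$. Writing $M(X)\triangleq -\nabla^2 g(X;\thetz)$ and denoting by $\pmb{e}_j$ the $j$-th canonical basis vector of $\mathbb{R}^r$, this yields $M(j)=(\theta_j^0)^{-2}\,\pmb{e}_j\pmb{e}_j^T$ and $M(r+1)=(\thetres^0)^{-2}\mathbf{1}\mathbf{1}^T$, whose expectation is $\BFmat$ as in~\eqref{eq:sigma_matrix}.

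Second, to obtain the almost-sure bound $K$, I would bound $\mtnorm{M(X)-\BFmat}$ for each alphabet value using the triangle inequality together with $\mtnorm{\pmb{e}_j\pmb{e}_j^T}=1$, $\mtnorm{\mathbf{1}\mathbf{1}^T}=r$, and $\mtnorm{\operatorname{diag}(1/\theta_1^{0},\ldots,1/\theta_r^{0})}\leq\thetmin^{-1}$. Maximizing over the three structurally distinct realizations (the coordinate of $\pmb{e}_j\pmb{e}_j^T$ coincides with the large diagonal entry, differs from it, or $X=r+1$) recovers the three entries of the maximum that define $\nu$ in Table~\ref{Table_param}. For the matrix variance, I would use $\bE{Z_1^2}=\bE{M(X)^2}-\BFmat^2$; because each $M(X)$ is rank-one, $M(X)^2$ is a scalar multiple of $M(X)$, giving $M(j)^2=(\theta_j^0)^{-4}\pmb{e}_j\pmb{e}_j^T$ and $M(r+1)^2=r(\thetres^0)^{-4}\mathbf{1}\mathbf{1}^T$, so $\bE{M(X)^2}=\operatorname{diag}(\theta_1^{0\,-3},\ldots,\theta_r^{0\,-3})+r(\thetres^0)^{-3}\mathbf{1}\mathbf{1}^T$. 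Expanding $\BFmat^2$ from~\eqref{eq:sigma_matrix} and bounding the spectral norm of the resulting expression uniformly in the free parameters (using only $\thetmin$) should reproduce $\omega$.

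I expect the main obstacle to be the variance step: $\bE{Z_1^2}$ contains cross products of the form $(\thetres^0)^{-1}\operatorname{diag}(1/\theta_j^{0})\mathbf{1}\mathbf{1}^T$ and its transpose that do not diagonalize cleanly. My intention is to decompose $\bE{Z_1^2}$ into its diagonal part, a rank-one part supported on $\mathbf{1}\mathbf{1}^T$, and the symmetric cross terms, then sum the individual operator norms and verify that the total is dominated by $\omega$. The remaining pieces --- the almost-sure bound $\nu$ and the final substitution into matrix Bernstein --- amount to a routine case analysis once the explicit rank-one structure of $M(X)$ is exploited.
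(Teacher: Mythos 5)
Your overall strategy coincides with the paper's: write $\rnm=\frac{1}{n}\sum_{i}\mim{i}$ with $\mim{i}=-\nabla^2 g(X_i;\thetz)-\BFmat$ i.i.d.\ and zero-mean, apply the matrix Bernstein inequality with $t=n\delta'$, and reduce the problem to showing $\mtnorm{\mim{i}}\leq\nu$ and $\mtnorm{\bE{\mim{1}^2}}\leq\omega$. Your variance step is sound and, if carried out, lands exactly on the paper's computation: $\bE{\mim{1}^2}=\bE{M(X)^2}-\BFmat^2$ decomposes into the diagonal part $\operatorname{diag}\big((\theta_i^0)^{-3}-(\theta_i^0)^{-2}\big)$, the symmetric cross term $-(\thetres^0)^{-1}(\pmb{d}\mathbf{1}^T+\mathbf{1}\pmb{d}^T)$ with $d_i=1/\theta_i^0$, and the rank-one piece $r\big((\thetres^0)^{-3}-(\thetres^0)^{-2}\big)\mathbf{1}\mathbf{1}^T$; bounding the three operator norms by $\thetmin^{-3}-\thetmin^{-2}$, $2r\thetmin^{-2}$ (Cauchy--Schwarz), and $r^2(1-\thetmin)\thetmin^{-3}$ and summing gives precisely $\omega$.

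The gap is in the almost-sure bound. A plain triangle inequality $\mtnorm{M(X)-\BFmat}\leq\mtnorm{M(X)}+\mtnorm{\BFmat}$ does \emph{not} recover the Table~\ref{Table_param} value of $\nu$: for $X=r+1$ it yields roughly $r\thetmin^{-2}+(r+1)\thetmin^{-1}$, strictly larger than the corresponding entry $r(\thetmin^{-2}-\thetmin^{-1})-1$, and no unsigned norm bound can produce the subtracted $r\thetmin^{-1}$ or the $-1$. The paper instead bounds $\lammax(\mim{i})$ and $\lammax(-\mim{i})$ separately and exploits sign structure: when upper-bounding $\lammax$ it discards the negative-semidefinite term $-\frac{1}{\thetres^0}\mathbf{1}\mathbf{1}^T$ and uses that all diagonal entries of $-\operatorname{diag}(1/\theta_j^0)$ are at most $-1$ (for $X=r+1$), respectively that all but the $a$-th are negative (for $X=a\in[1:r]$). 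Relatedly, your ``three structurally distinct realizations'' do not correspond to where the three entries of $\nu$ come from: there are only two realizations of $M(X)$ up to relabeling, and the three entries arise as (positive direction, $X\in[1:r]$), (positive direction, $X=r+1$), and (negative direction, $X\in[1:r]$), the last giving $(r+1)\thetmin^{-1}$. With your looser constant $K$ the Bernstein bound still holds, but with a larger exponent denominator, i.e.\ a weaker $\epsilon$ than the one defined in Table~\ref{Table_param}; since the lemma asserts the bound with that specific $\epsilon(n,\delta')$, the statement as written is not established without the one-sided eigenvalue analysis.
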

\begin{proof}
The proof is an immediate result of the matrix Bernstein inequality~\cite[Thm. 1.6.2]{Tropp2015}.
See \ref{Appendix:Lemma_lambda_dev_proof} for the complete proof.
\end{proof}
% \vspace{1mm}

% -----------------------------------------------------------------------------
%--------------------------- %
%------- Upper bound ------- %
%--------------------------- %
\subsection{Upper Bound on \texorpdfstring{$P_n^*$}{Pn*}}

In order to bound $A_{12}$ and $A_{13}$, note that for any positive integer $s$, it holds that
\begin{equation}
\pmb{v}^T\,\bmat^s\,\pmb{v} \leq \mtnorm{\bmat}^s\normt{\pmb{v}}^2\,. \label{eq:norm_ineq}
\end{equation}
Then, consider the following
\begin{align}
-A_{12}+A_{13} &=-\sum_{k=1}^\infty \pmb{v}^T\, \bmat^{2k-1}\, \pmb{v} + \sum_{k=1}^\infty \pmb{v}^T\, \bmat^{2k}\, \pmb{v}\nonumber\\
&\stackrel{(a)}{\geq} -\normt{\pmb{v}}^2\sum_{k=1}^\infty \mtnorm{\bmat}^{2k-1} - \normt{\pmb{v}}^2\sum_{k=1}^\infty \mtnorm{\bmat}^{2k} \nonumber\\
&\stackrel{(b)}{\geq} -\normt{\pmb{v}}^2\sum_{k=1}^\infty \mtnorm{\rnm}^{k},\label{eq:A12_A13_LB}
\end{align}
where $(a)$ follows from~\eqref{eq:norm_ineq} and the fact that we turn positive terms into negative ones, and $(b)$ stems from Lemma~\ref{Lemma_eig_B}.
Given that the statement of the Theorem specifies that $\delta'<1$, if the event $\event{R}^c$ occurs, we have that $\mtnorm{\rnm}\leq \delta'<1$ and we may then combine~\eqref{A_1_exp}, \eqref{eq:A1_exp_def2}, and~\eqref{eq:A12_A13_LB} to obtain:
% $A_1$ may be expandend as~\eqref{A_1_exp}, so from \eqref{eq:A12_A13_LB} it yields that:
\begin{align}
A_1 \geq \normt{\pmb{v}}^2-\normt{\pmb{v}}^2\sum_{k=1}^\infty \mtnorm{\rnm}^{k} \geq \normt{\pmb{v}}^2 (1-\delta_s)\,, \label{eq:A_1_Bound}
\end{align}
where
\begin{equation}
\delta_s\triangleq \sum_{k=1}^\infty(\delta')^{k} = \frac{\delta'}{1-\delta'}\,. \label{eq:delta_s}
\end{equation}

Consider now the following bound for $|A_2|$,
\begin{align}
\abs{A_2} &=\bar{G}_n^2\,\frac{\normt{\lnv}^4}{n} \,\abs{{\pmb{\alpha'}}^T\jnm^{-1}\pmb{\alpha'}}\nonumber\\
&\leq \bar{G}_n^2\,\frac{\normt{\lnv}^4}{n}\normt{\pmb{\alpha'}}^2\,\mnorm{\jnm^{-1}}\nonumber\\
&\stackrel{(a)}{\leq} \bar{G}_n^2\,\frac{r^3}{4}\frac{\normt{\lnv}^4}{n}\,\mnorm{\jnm^{-1}},\label{eq:A2_bnd}
\end{align}
where $(a)$ follows from the fact that $\abs{\smash{\alpha'_j}}\leq \frac{r}{2}$ for $j\in[1:r]$ according to Lemma~\ref{Lemma_Taylor_NP}.
We may also bound $|A_3|$ as
\begin{align}
\abs{A_3} &=\abs{\alpha}\,\bar{G}_n\,\frac{\normt{\lnv}^3}{\sqrt{n}}\nonumber\\ 
&\leq \bar{G}_n\,\frac{r^\frac{3}{2}}{3}\frac{\normt{\lnv}^3}{\sqrt{n}}\label{eq:A3_bnd_}
%&= \frac{\alpha}{n^{3/2}} \normt{\lnv}^3\sum_{i=1}^n G(X_i) \\
% &\stackrel{(a)}{\geq} \frac{-r^3}{6n^{3/2}}\normt{\lnv}^3\sum_i G(X_i)\\
% &\stackrel{(b)}{\geq} \frac{-r^3}{3\,\thetmin^3\,n^{1/2}}\normt{\lnv}^3\,, \label{eq:A3_bnd_}
\end{align}
where the inequality is due to $\abs{\alpha}\leq \frac{r^{3/2}}{3}$ according to Lemma~\ref{Lemma_Taylor_NP}.

From~\eqref{eq:P_star} and the bounds~\eqref{eq:A2_bnd} and~\eqref{eq:A3_bnd_}, we obtain that
\begin{align}
P^*_n\leq \PR{A_1 -\bar{G}_n^2\,\frac{r^3}{4}\frac{\normt{\lnv}^4}{n}\,\mnorm{\jnm^{-1}} -\bar{G}_n\,\frac{r^\frac{3}{2}}{3}\frac{\normt{\lnv}^3}{\sqrt{n}}< a }\,. \label{eq:Bound_P_Rn}
\end{align}
Before proceeding, we introduce the following two lemmas which enable us to derive refined bounds for $A_2$ and $A_3$ that only depend on $\mnorm{\rnm}$ and $\normt{\lnv}$.
% in the argument of \eqref{eq:Bound_P_Rn}.

\vspace{1mm}
\begin{lemma}\label{Lemma_Jn_eig}
For any $n$, if $\mtnorm{\rnm}\leq 1$, the spectral norm of $\jnm^{-1}$ is bounded from above by
\begin{align}
\mnorm{\jnm^{-1}} \leq (1-\mtnorm{\rnm})^{-1}\,.
\end{align}
\end{lemma}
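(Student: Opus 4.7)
The plan is to combine Weyl's inequality for the smallest eigenvalue of a sum of symmetric matrices with a structural lower bound on $\lammin(\BFmat)$ read off directly from the explicit form~\eqref{eq:sigma_matrix}. Since $\jnm = \BFmat + \rnm$ with both summands symmetric ($\jnm$ as an average of Hessians of $g$, $\BFmat$ as the corresponding expectation, and $\rnm$ as their difference), Weyl's inequality yields
\begin{equation*}
\lammin(\jnm) \;\geq\; \lammin(\BFmat) + \lammin(\rnm) \;\geq\; \lammin(\BFmat) - \mtnorm{\rnm},
\end{equation*}
where the second step uses $\lammin(\rnm) \geq -\mtnorm{\rnm}$ for any symmetric matrix. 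Because $\jnm$ is symmetric, $\mtnorm{\jnm^{-1}} = 1/\lammin(\jnm)$ whenever the right-hand side is positive, so the task reduces to a clean bound on $\lammin(\BFmat)$.

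Next I would establish $\lammin(\BFmat) \geq 1$ under Assumption~\ref{asm:nonzero}. From~\eqref{eq:sigma_matrix}, $\BFmat$ decomposes as the diagonal matrix $\operatorname{diag}(1/\theta_1^0,\ldots,1/\theta_r^0)$ plus the positive-semidefinite rank-one matrix $(1/\thetres^0)\mathbf{1}\mathbf{1}^T$. Since every $\theta_j^0 \in (0,1)$, each diagonal entry satisfies $1/\theta_j^0 \geq 1$, so the diagonal part already has smallest eigenvalue at least $1$; adding the PSD rank-one correction cannot decrease that smallest eigenvalue (again by Weyl), giving $\lammin(\BFmat) \geq 1$.

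Combining the two steps yields $\lammin(\jnm) \geq 1 - \mtnorm{\rnm}$, which is positive whenever $\mtnorm{\rnm} < 1$ (the natural reading of the hypothesis, to avoid a vacuous division), and the claimed bound $\mtnorm{\jnm^{-1}} \leq (1 - \mtnorm{\rnm})^{-1}$ follows at once. The main obstacle is really just the structural inequality $\lammin(\BFmat) \geq 1$; once that is secured, the rest is routine spectral bookkeeping. As a sanity check, one could bypass Weyl by writing $\jnm = \BFmat^{1/2}(\eye + \bmat)\BFmat^{1/2}$, inverting via a Neumann series valid when $\mtnorm{\bmat} < 1$, and combining Lemma~\ref{Lemma_eig_B} with $\mtnorm{\BFmat^{-1}} \leq 1$ (equivalent to $\lammin(\BFmat) \geq 1$); this gives the same conclusion through a longer route.
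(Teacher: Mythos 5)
Your proposal is correct and takes essentially the same route as the paper's proof: both arguments reduce to the structural bound $\lammin(\BFmat)\geq 1$ obtained from the decomposition~\eqref{eq:sigma_matrix} into a diagonal part with entries at least $1$ plus a positive-semidefinite rank-one term, and both then lower-bound $\lammin(\jnm)$ by $\lammin(\BFmat)-\mtnorm{\rnm}$ (the paper derives this Weyl-type step by hand via the Rayleigh quotient and the triangle inequality rather than citing Weyl's inequality). Your remark that the hypothesis should be read as $\mtnorm{\rnm}<1$ to avoid a vacuous bound is also consistent with how the lemma is invoked in the paper.
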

\begin{proof}
See \ref{Appendix:Lemma_Jn_eig_proof}.
\end{proof}

\vspace{1mm}
\begin{lemma}
\label{Lemma_Bounded_sum_G}
According to the model definition, if $\normt{\lnv}\leq\sqrt{\frac{n}{r}}\,\thetmin$, then
\begin{align}
\bar{G}_n=\frac{1}{n}\sum_{i=1}^{n} G(X_i)\leq 2\bigg(\thetmin-\sqrt{\frac{r}{n}}\,\normt{\lnv}\bigg)^{-3}. \label{eq:Bound_G}
\end{align}
\end{lemma}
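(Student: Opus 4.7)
The plan is to carry out an entirely direct computation of the third partial derivatives of $g(X;\thet)=\log P(X;\thet)$ from the parametric model~\eqref{eq:model}. For $j\in[1:r]$ one has $g(j;\thet)=\log\theta_j$, which depends only on the single coordinate $\theta_j$; hence the only non-vanishing third derivative is $g'''_{jjj}(j;\thet)=2/\theta_j^{\,3}$. For $j=r+1$ one has $g(r+1;\thet)=\log\thetres$ with $\thetres$ a linear function of every coordinate of $\thet$, yielding $g'''_{uvw}(r+1;\thet)=-2/\thetres^{\,3}$ for \emph{every} triple $(u,v,w)$. In both cases every relevant third derivative has magnitude $2/\theta_k^{\,3}$, where $\theta_k$ is either some $\theta_j$ with $j\in[1:r]$ or the residual $\thetres$, all evaluated at some $\thet'$ on the segment connecting $\thetz$ and $\thets$.

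The main step is a uniform lower bound on these components along that segment. Writing $\thet'=(1-t)\thetz+t\thets$ with $t\in[0,1]$, for any coordinate $k\in[1:r]$ I would use $|\theta'_k-\theta^0_k|\leq|\theta^*_k-\theta^0_k|\leq\normt{\thets-\thetz}=\normt{\lnv}/\sqrt{n}$. For the residual, since $\thetres'-\thetres^0=-\sum_{j=1}^r(\theta'_j-\theta^0_j)$, a Cauchy--Schwarz bound on this sum of $r$ terms gives $|\thetres'-\thetres^0|\leq\sqrt{r}\,\normt{\thets-\thetz}=\sqrt{r/n}\,\normt{\lnv}$. Since $\sqrt{r/n}\,\normt{\lnv}\geq\normt{\lnv}/\sqrt{n}$ for $r\geq 1$, the single estimate $\theta'_k\geq\thetmin-\sqrt{r/n}\,\normt{\lnv}$ applies uniformly to every regular coordinate and to the residual, and the hypothesis $\normt{\lnv}\leq\sqrt{n/r}\,\thetmin$ keeps this strictly positive.

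Plugging this lower bound into the explicit form of $g'''_{uvw}$ yields the sample-independent estimate $G(X_i)\leq 2\big(\thetmin-\sqrt{r/n}\,\normt{\lnv}\big)^{-3}$ for every $i$; averaging over $i$ produces the claim. There is no genuine obstacle: the only point that requires attention is the asymmetry between a regular coordinate and the residual, where the latter is a sum of $r$ coordinates and therefore picks up an extra $\sqrt{r}$ via Cauchy--Schwarz. This $\sqrt{r}$ is precisely what appears in the final bound and in the scope condition $\normt{\lnv}\leq\sqrt{n/r}\,\thetmin$.
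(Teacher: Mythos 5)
Your proposal is correct and follows essentially the same route as the paper's proof: explicit computation of the third derivatives, a Cauchy--Schwarz bound of $\sqrt{r/n}\,\normt{\lnv}$ on the deviation of the residual coordinate, and the observation that this (weaker) bound dominates the single-coordinate one, so it can be applied uniformly. No gaps.
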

\begin{proof}
See \ref{Appendix:Lemma_Bounded_sum_G_proof}.
\end{proof}
\vspace{1mm}

Let $0<\delta<\frac{\thetmin^2}{r}$ and $0<\delta'<1$, and consider the following expansion of~\eqref{eq:Bound_P_Rn},
\begin{align}
P^*_n &\stackrel{(a)}{\leq} \PR{A_1 -\frac{r^3}{4}\frac{\normt{\lnv}^4}{n}\mnorm{\jnm^{-1}}\bar{G}_n^2 -\frac{r^\frac{3}{2}}{3} \frac{\normt{\lnv}^3}{\sqrt{n}}\,\bar{G}_n< a,\,  \event{R}^c } + \epsilon(n,\delta') \nonumber\\
&\stackrel{(b)}{\leq} \PR{ (1-\delta_s) \normt{\pmb{v}}^2 -\frac{r^3}{4}\frac{\normt{\lnv}^4\,\bar{G}_n^2}{n (1-\delta')} -\frac{r^\frac{3}{2}}{3} \frac{\normt{\lnv}^3}{\sqrt{n}}\,\bar{G}_n< a,\, \event{R}^c } + \epsilon(n,\delta') \nonumber\displaybreak[2]\\
&\stackrel{(c)}{\leq} \PR{ (1-\delta_s) \normt{\pmb{v}}^2 -\frac{r^3}{4}\frac{\normt{\lnv}^4\,\bar{G}_n^2}{n (1-\delta')} -\frac{r^\frac{3}{2}}{3} \frac{\normt{\lnv}^3}{\sqrt{n}}\,\bar{G}_n< a,\, \event{l}^c } + \epsilon(n,\delta')\nonumber\\
&\quad + 2r\exp\left(-\frac{2\,n\delta}{r}\right) \nonumber\displaybreak[2]\\
&\stackrel{(d)}{\leq} \PR{ (1-\delta_s) \normt{\pmb{v}}^2 -\frac{r^3 \delta^2/[n (1-\delta')]}{\big(\thetmin-\sqrt{\frac{r}{n}\delta}\big)^6} -\,\frac{2\,r^\frac{3}{2} \delta^\frac{3}{2}/(3\sqrt{n})}{\big(\thetmin-\sqrt{\frac{r}{n}\delta}\big)^3}< a }+\epsilon(n,\delta') \nonumber\\
&\quad + 2r\exp\left(-\frac{2\,n\delta}{r}\right) \nonumber\\
&\stackrel{(e)}{=} \PR{\normt{\pmb{v}}^2<a +\frac{\Delta(n)}{1-\delta_s} } + \epsilon(n,\delta')  + 2r\exp\left(-\frac{2\,n\delta}{r}\right), \label{P1_bound1_final}
\end{align}
where
$(a)$ is due to Lemma~\ref{Lemma_lambda_deviation},
$(b)$ follows from $\mtnorm{\rnm}\leq \delta'<1$ conditioned on the event $\event{R}^c$, and the use of~\eqref{eq:A_1_Bound} and Lemma~\ref{Lemma_Jn_eig},
$(c)$ stems from the fact that $\PR{\event{R}^c}\leq 1$ and the use of Lemma~\ref{Lemma_ln_bound},
$(d)$ is due to $\frac{1}{n}\normt{\lnv}^2\leq \delta\leq \frac{\thetmin^2}{r}$ conditioned on the event $\event{l}^c$, the use of Lemma~\ref{Lemma_Bounded_sum_G}, and the fact that $\PR{\event{l}^c}\leq 1$.
Finally, $(e)$ follows from the definition
\begin{align}
\Delta(n) \triangleq \delta_s a +\frac{n\,r^3\delta^2/[1-\delta']}{\big(\thetmin-\sqrt{r\,\delta}\big)^6} +\,\frac{2\,n\,r^\frac{3}{2} \delta^{\frac{3}{2}}}{3\big(\thetmin-\sqrt{r\,\delta}\big)^3}\,. \label{eq:Del}
\end{align}

The first term on the r.h.s. of~\eqref{P1_bound1_final} is the \emph{cdf} of a quadratic form which asymptotically converges to a $\chi_r^2$ distribution. An explicit bound for the gap between the true and the $\chi_r^2$ distributions is found in~\cite{bentkus2003dependence}, which we restate here for completeness.

\vspace{1mm}
\begin{lemma}\label{Lemma_Bentkus}
Let $\pmb{v} = n^{-\frac{1}{2}}\sum_{i=1}^n \BFmat^{-\frac{1}{2}} \nabla g(X_i;\thetz)$, then the following bound holds for the \emph{cdf} of the quadratic term $\normt{\pmb{v}}^2$: 
\begin{align}
 \sup_a\, \abs{\PR{\normt{\pmb{v}}^2< a} -F(r,a)}\leq \frac{h(\thetz)}{\sqrt{n}}\,, \label{eq:cdfs_diff}
\end{align}
where
\begin{equation}
F(r,a)\triangleq P_G\left(\frac{r}{2},\frac{a}{2}\right), 
\end{equation}
with $P_G$ being the regularized gamma function, and 
\begin{align}
 h(\thetz) &= 400\,r^\frac{1}{4} \bigg( \sum_{j=1}^{r} \theta^0_j  \Big(\frac{1}{\theta^0_j}-1\Big)^\frac{3}{2} + \thetres^0 \Big(\frac{1}{\thetres^0}-1\Big)^\frac{3}{2} \bigg)\,. \label{eq:h_init}
\end{align}
\end{lemma}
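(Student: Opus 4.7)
The plan is to recognize that $\pmb{v} = n^{-1/2}\sum_{i=1}^n Y_i$ where $Y_i \triangleq \BFmat^{-1/2}\nabla g(X_i;\thetz)$ are i.i.d.\ random vectors, and then invoke the explicit Berry--Esseen-type bound of Bentkus~\cite{bentkus2003dependence} for the chi-square approximation of quadratic forms in i.i.d.\ sums. Bentkus's theorem states that if $Y_1,\ldots,Y_n$ are i.i.d.\ zero-mean $\mathbb{R}^r$-valued random vectors with identity covariance and finite third absolute moment $\beta_3 \triangleq \bE{\normt{Y_1}^3}$, then
\begin{equation*}
\sup_{a} \abs{\PR{\normt{n^{-1/2}\sum_i Y_i}^2 < a} - F(r,a)} \,\leq\, \frac{C\,r^{1/4}\beta_3}{\sqrt{n}},
\end{equation*}
with $C \leq 400$. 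So all that is left is to verify the hypotheses and to compute $\beta_3$ explicitly for the discrete model.

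First I would check the hypotheses. Zero mean of $\nabla g(X;\thetz)$ is already observed in the preceding subsection, and the identity covariance of $Y_i$ follows from~\eqref{eq:Fisher_def}: $\Cov{Y_1} = \BFmat^{-1/2}\,\BFmat\,\BFmat^{-1/2} = \eye$. Next, the key computation is to evaluate $\normt{Y_i}^2 = \nabla g(X_i;\thetz)^T \BFmat^{-1} \nabla g(X_i;\thetz)$ at each atom $X_i = j$, using the closed form~\eqref{eq:sigma_inv}. For $j\in[1:r]$, $\nabla g(j;\thetz) = (\theta_j^0)^{-1} e_j$, and a direct calculation with $\BFmat^{-1}=\operatorname{diag}(\thetz)-\thetz(\thetz)^T$ gives $\normt{Y}^2 = (\theta_j^0)^{-1}(1-\theta_j^0) = 1/\theta_j^0 - 1$. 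For $j = r+1$, $\nabla g(r+1;\thetz) = -(\thetres^0)^{-1}\mathbf{1}$ and an analogous computation yields $\normt{Y}^2 = 1/\thetres^0 - 1$.

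Assembling the third absolute moment then gives
\begin{equation*}
\beta_3 = \bE{\normt{Y_1}^3} = \sum_{j=1}^r \theta_j^0 \Big(\tfrac{1}{\theta_j^0}-1\Big)^{\!3/2} + \thetres^0\Big(\tfrac{1}{\thetres^0}-1\Big)^{\!3/2},
\end{equation*}
which matches the bracketed expression in~\eqref{eq:h_init} after multiplying by the Bentkus constant $400\,r^{1/4}$. Plugging $\beta_3$ back into Bentkus's bound yields the claimed inequality~\eqref{eq:cdfs_diff}.

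The main obstacle is not the derivation itself but rather the correct and transparent invocation of Bentkus's result: one must be careful that his statement applies to the \emph{cdf} of $\normt{\pmb{v}}^2$ uniformly in $a$ with the stated numerical constant, and that the standardization $\BFmat^{-1/2}$ is exactly what is needed so the summands have identity covariance. Once that reduction is made, the remaining work is the explicit moment computation above, which is elementary thanks to the simple diagonal-plus-rank-one structure of $\BFmat^{-1}$ in~\eqref{eq:sigma_inv}.
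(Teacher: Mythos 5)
Your proposal is correct and follows essentially the same route as the paper: invoke Bentkus's Theorem~1.1 for the standardized i.i.d.\ sum and then compute the third absolute moment $\bE{\normt{\BFmat^{-1/2}\nabla g(X;\thetz)}^3}$ explicitly via the diagonal-plus-rank-one form of $\BFmat^{-1}$, yielding exactly the bracketed expression in~\eqref{eq:h_init}. Your per-atom evaluation of $\nabla g(j;\thetz)^T\BFmat^{-1}\nabla g(j;\thetz)=1/\theta_j^0-1$ (and likewise for $j=r+1$) is the same calculation the paper performs, just written out in slightly more detail.
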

\begin{proof}
The general bound~\eqref{eq:cdfs_diff} is shown in~\cite[Thm. 1.1]{bentkus2003dependence}, where $h(\thetz)$ is given by
\begin{align}
h(\thetz)=400\,r^\frac{1}{4}\,\bE{\normt{\BFmat^{-\frac{1}{2}} \nabla g(X_i;\thetz)}^3}.
\end{align}
Then, using the definition of $\BFmat$ in~\eqref{eq:sigma_matrix} we find that:
\begin{align}
\bE{\normt{\BFmat^{-\frac{1}{2}} \nabla g(X_i;\thetz)}^3}
&= \bE{ \big( \nabla g(X_i;\thetz)^T\BFmat^{-1} \nabla g(X_i;\thetz) \big)^\frac{3}{2} }\nonumber\\
&=\sum_{j=1}^{r} \theta^0_j  \Big(\frac{1}{\theta^0_j}-1\Big)^\frac{3}{2} + \thetres^0 \Big(\frac{1}{\thetres^0}-1\Big)^\frac{3}{2}\,,
\end{align}
which is the statement of the present Lemma.
\end{proof}
\vspace{1mm}

\begin{remark}
One may further bound $h(\thetz)$ from above so it only depends on the parameters $\thetmin$ and $r$. From~\eqref{eq:h_init}, it holds that:
\begin{align}
h(\thetz)&\leq 400\,r^\frac{1}{4} \bigg( \sum_{j=1}^{r} \theta^0_j \Big(\frac{1}{\thetmin}-1\Big)^\frac{3}{2} + \thetres^0 \Big(\frac{1}{\thetmin}-1\Big)^\frac{3}{2} \bigg) \nonumber\\
&=400\,r^\frac{1}{4} \Big(\frac{1}{\thetmin}-1 \Big)^\frac{3}{2}.
\end{align}
However, this bound might be too loose if $\thetmin$ is much smaller than the average $\theta^0_j$.
\end{remark}
\vspace{1mm}

\begin{remark}\label{Remark:Benktus}
There exist other bounds with a better convergence rate than that of Lemma~\ref{Lemma_Bentkus}, e.g., $\mathcal{O}(n^{-1})$ (\cite{bentkus1997uniform,gotze2014explicit}) or $\mathcal{O}(n^{-\frac{r}{r+1}})$ (\cite{esseen1945fourier}). However, the said bounds depend on constants that are not explicitly given and need to be determined for each particular case, which is contrary to the objective of this work. Moreover, the proposed bounds in~\cite{bentkus1997uniform, gotze2014explicit} are valid for $r\geq 9$ and $r\geq 5$ respectively, which also reduces the generality of the result.
\end{remark}
\vspace{1mm}

Employing Lemma~\ref{Lemma_Bentkus}, the first term on the r.h.s. of~\eqref{P1_bound1_final} may be described using the \emph{cdf} of a $\chi_r^2$ with an asymptotically negligible error, i.e.,
\begin{align}
P^*_n&\leq F\!\left(r,a+\frac{\Delta(n)}{1-\delta_s}\right)+ \epsilon(n,\delta')  + 2r\exp\left(-\frac{2\delta}{r}\right) + \frac{h(\thetz)}{\sqrt{n}} \label{eq:P*bound}\\
&= F\!\left(r,a+\frac{\Delta(n)}{1-\delta_s}\right)+\mu\,,
\end{align}
where $\mu$ is defined in \eqref{eq:mu}. This concludes the proof of the upper bound.

% -----------------------------------------------------------------------------
%--------------------------- %
%------- Lower bound ------- %
%--------------------------- %				
\subsection{Lower Bound on \texorpdfstring{$P_n^*$}{Pn*}}

To derive the lower bound for $P_n^*$ in~\eqref{eq:P_star}, it is more convenient to bound the complement probability,
\begin{align}
1-P^*_n=\Pr\left(A_{1} -A_2 + A_3>a \right).\label{eq:P_star_C}
\end{align}
Similar to the previous part, the argument of the probability in~\eqref{eq:P_star_C} is first represented with tractable bounds. From~\eqref{eq:norm_ineq} and Lemma~\ref{Lemma_eig_B}, we have:
\begin{align}
-A_{12}+A_{13} &=-\sum_{k=1}^\infty \pmb{v}^T\, \bmat^{2k-1}\, \pmb{v} + \sum_{k=1}^\infty \pmb{v}^T\, \bmat^{2k}\, \pmb{v}\nonumber\\
&\leq \normt{\pmb{v}}^2\sum_{k=1}^\infty \mtnorm{\bmat}^{2k-1} + \normt{\pmb{v}}^2\sum_{k=1}^\infty \mtnorm{\bmat}^{2k} \nonumber\\
&\leq \normt{\pmb{v}}^2\sum_{k=1}^\infty \mtnorm{\rnm}^{k}.\label{eq:A12_A13_UB}
\end{align}
Therefore, if the event $\event{R}^c$ occurs, we have that $\mtnorm{\rnm}\leq \delta'<1$ and we may then combine~\eqref{A_1_exp}, \eqref{eq:A1_exp_def2}, and~\eqref{eq:A12_A13_UB} to obtain:
\begin{align}
A_1 \leq \normt{\pmb{v}}^2 +\normt{\pmb{v}}^2\sum_{k=1}^\infty \mtnorm{\rnm}^{k} \leq \normt{\pmb{v}}^2 (1+\delta_s)\,, \label{eq:A_1_LB}
\end{align}
where $\delta_s$ is defined in~\eqref{eq:delta_s}.

With a similar approach as in~\eqref{P1_bound1_final}, we bound~\eqref{eq:P_star_C} from above as follows,
\begin{align}
1-P^*_n&\stackrel{(a)}{\leq} \PR{ A_1 + \frac{r^3}{4}\frac{\normt{\lnv}^4}{n}\mnorm{\jnm^{-1}}\bar{G}_n^2 +\frac{r^\frac{3}{2}}{3}\frac{\normt{\lnv}^3}{\sqrt{n}}\,\bar{G}_n > a }\nonumber\\
&\stackrel{(b)}{\leq} \PR{ (1+\delta_s)\normt{\pmb{v}}^2 + \frac{r^3}{4}\frac{\normt{\lnv}^4\,\bar{G}_n^2}{n(1-\delta')} +\frac{r^\frac{3}{2}}{3} \frac{\normt{\lnv}^3}{\sqrt{n}}\,\bar{G}_n > a,\, \event{R}^c } +\epsilon(n,\delta') \nonumber\\
&\stackrel{(c)}{\leq} \PR{(1+\delta_s)\normt{\pmb{v}}^2+ \frac{r^3}{4} \frac{\normt{\lnv}^4\,\bar{G}_n^2}{n(1-\delta')} +\frac{r^\frac{3}{2}}{3} \frac{\normt{\lnv}^3}{\sqrt{n}}\,\bar{G}_n > a ,\, \event{l}^c }+ \epsilon(n,\delta')\nonumber\\ 
&\quad + 2r\exp\left(-\frac{2\,n\delta}{r}\right) \nonumber\\
&\stackrel{(d)}{\leq} \PR{ (1+\delta_s) \normt{\pmb{v}}^2 +\frac{r^3\delta^2 /[n (1-\delta')]}{\big( \thetmin-\sqrt{\frac{r}{n} \delta} \big)^6} +\frac{2\,r^\frac{3}{2}\delta^\frac{3}{2}/(3\sqrt{n})}{\big( \thetmin-\sqrt{\frac{r}{n}\delta} \big)^3}> a }  \nonumber\\
&\quad + \epsilon(n,\delta')+ 2r\exp\left(-\frac{2\,n\delta}{r}\right) \nonumber\\
&\stackrel{(e)}{=} \PR{\normt{\pmb{v}}^2>a-\frac{\Delta(n)}{1+\delta_s} } + \epsilon(n,\delta')+ 2r\exp\left(-\frac{2\,n\delta}{r}\right), \label{eq:P_star_lower}
\end{align}
where
$(a)$ is due to~\eqref{eq:A2_bnd} and~\eqref{eq:A3_bnd_},
$(b)$ follows from $\mtnorm{\rnm}\leq \delta'<1$ conditioned on the event $\event{R}^c$, and the use of~\eqref{eq:A_1_LB} and Lemmas~\ref{Lemma_lambda_deviation} and~\ref{Lemma_Jn_eig},
$(c)$ stems from the fact that $\PR{\event{R}^c}\leq 1$ and the use of Lemma~\ref{Lemma_ln_bound},
$(d)$ is due to $\frac{1}{n}\normt{\lnv}^2\leq \delta\leq \frac{\thetmin^2}{r}$ conditioned on the event $\event{l}^c$, the use of Lemma~\ref{Lemma_Bounded_sum_G}, and the fact that $\PR{\event{l}^c}\leq 1$,
and $(e)$ stems from the definition of $\Delta(n)$ in~\eqref{eq:Del}.

Finally, applying Lemma~\ref{Lemma_Bentkus} on the first term on the r.h.s. of~\eqref{eq:P_star_lower}, a lower bound for $P_n^*$ may be described using the \emph{cdf} of a $\chi_r^2$ random variable:
\begin{align}
P^*_n &\geq F\!\left(r,a-\frac{\Delta(n)}{1+\delta_s}\right) - \mu\,, \label{eq:p_star_L}
\end{align}
where $\mu$ is defined in~\eqref{eq:mu}. This concludes the proof of Theorem~\ref{Theorem_P_star_Bound}.

% ==========================================================
%===========================================================
%===========================================================
%======================== Conclusion   =====================
%===========================================================
%===========================================================
\section{Discussion and Final Remarks}
\label{Sec:conclusion}
In this paper, we presented an explicit bound describing the \emph{cdf} of the log-likelihood ratio $\Lambda_n$ for finite number of samples. The bounding procedure consisted of two main steps: a quadratic form approximation, where we used a Taylor expansion of $\Lambda_n$, and a $\chi^2$ approximation, which was based on~\cite{bentkus2003dependence}. In the sequel, we briefly discuss these two approximations and, finally, we conclude the work with possible future extensions.

% -----------------------------------------------------------------------------
%--------------------------- %
%-------- Quadratic  ------- %
%--------------------------- %

\subsection{Quadratic Approximation}

The bounds~\eqref{P1_bound1_final} and~\eqref{eq:P_star_lower} exhibit the asymptotic quadratic nature of the LLR. This behavior has also been addressed for finite samples in~\cite{spokoiny2015bootstrap} (and the references therein).
As it was mentioned, the presence of the quadratic form in the bound was the result of using a Taylor expansion of the LLR. Another approach would be to employ~\cite[Thm. B.2]{spokoiny2015bootstrapSupp}, as long as the required assumptions in~\cite{spokoiny2015bootstrap} hold true; the aforementioned assumptions bound the exponential moments and spectral norm of the random process.
For instance, condition ($\mathcal{L}_0$) in~\cite{spokoiny2015bootstrap} states that, for any $n$ and for $\rho>0$,
there exists a constant $0\leq\delta_\rho\leq 0.5$ such that for all $\thet\in\varTheta_{\rho,n}$,
\begin{equation}
\mtnorm{\BFmat^{-\frac{1}{2}}(\pmb{J}_{\thet}-\BFmat)\BFmat^{-\frac{1}{2}}}\leq \delta_\rho\,,
\end{equation}
where $\varTheta_{\rho,n} \triangleq \Big\{\thet: \normt{\BFmat^{\frac{1}{2}}(\thet-\thetz)}\leq\frac{\rho}{\sqrt{n}}\Big\}$ and $\pmb{J}_{\thet} \triangleq \bE{-\nabla^2 g(X;\thet)}$.
This condition is analogous to the event $\event{R}$ defined in~\eqref{eq:E_lambda} (also see Lemma~\ref{Lemma_eig_B}), which states that the empirical Fisher information matrix, instead of $\pmb{J}_{\thet}$, is close to $\BFmat$ in the sense of the spectral norm.
We show now that, employing~\cite[Thm. B.2]{spokoiny2015bootstrapSupp}, we may obtain a different bound in Theorem~\ref{Theorem_P_star_Bound}, which is tighter for some values of $n$.
We restate here~\cite[Thm.~B.2]{spokoiny2015bootstrapSupp} for the i.i.d. case as a lemma.

\vspace{1mm}
\begin{lemma}
\label{Lemma:Spok}
Let the conditions in~\cite[Sec. 4]{spokoiny2015bootstrap} be fulfilled, and let $d=k \log n$ and $k>1.85$.
Then, the following anti-concentration bound holds for $\Lambda_n$:
\begin{align}
\PR{ \event{\Lambda'} } \leq 5\exp(-d)\,, \label{eq:Spok}
\end{align}
where
\begin{equation}
 \event{\Lambda'}\triangleq\left\{\abs{\Lambda_n-\normt{\pmb{v}}^2}\geq k\sqrt{\frac{(r+d)^3}{n}}\right\}\,,
\end{equation}
and $\pmb{v} = n^{-\frac{1}{2}}\sum_{i=1}^n \BFmat^{-\frac{1}{2}} \nabla g(X_i;\thetz)$.
\end{lemma}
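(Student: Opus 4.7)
Since Lemma~\ref{Lemma:Spok} is a direct specialization of \cite[Thm.~B.2]{spokoiny2015bootstrapSupp} to the i.i.d.\ setting, my plan is to (a)~verify that the regularity assumptions of \cite[Sec.~4]{spokoiny2015bootstrap} hold for the discrete model~\eqref{eq:model}, and (b)~specialize the generic penalty and probability constants in that theorem to obtain the explicit form $k\sqrt{(r+d)^3/n}$ with $k>1.85$ and tail $5e^{-d}$.

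For (a), the conditions that must be checked are, in Spokoiny--Zhilova's notation, $(\mathcal{I})$, $(\mathcal{L}_0)$, and $(\mathcal{E}\mathcal{D}_0)$. Condition $(\mathcal{I})$, non-degeneracy of $\BFmat$, is immediate from Assumption~\ref{asm:nonzero} and the closed form~\eqref{eq:sigma_matrix}. Condition $(\mathcal{L}_0)$, which requires the expected Hessian $\pmb{J}_\thet$ to be uniformly spectrally close to $\BFmat$ on the local ball $\varTheta_{\rho,n}$, follows by Taylor expanding $\thet\mapsto\pmb{J}_\thet$ around $\thetz$ on the interior of the probability simplex; the resulting $\delta_\rho$ is of order $\rho/\sqrt{n}$, hence bounded by $1/2$ for $\rho/\sqrt{n}$ small enough. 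Condition $(\mathcal{E}\mathcal{D}_0)$, an exponential moment bound on the standardized score, is automatic because $\BFmat^{-1/2}\nabla g(X;\thetz)$ is a bounded random vector on a finite alphabet, with bound controlled by $\thetmin$ through~\eqref{eq:sigma_inv}. The remaining identifiability/separation assumption is satisfied by the strict concavity of the multinomial log-likelihood at $\thetz$.

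For (b), I would instantiate \cite[Thm.~B.2]{spokoiny2015bootstrapSupp} at $d=k\log n$ with local radius $\rho = C_0\sqrt{r+d}$, which is the natural concentration scale of the quadratic term $\normt{\pmb{v}}^2$. Their theorem then yields a two-sided bracketing $|\Lambda_n-\normt{\pmb{v}}^2|\leq C\,(r+d)^{3/2}/\sqrt{n}$ on an event of probability at least $1-5e^{-d}$; absorbing $C$ into the free parameter $k$ gives the stated form $k\sqrt{(r+d)^3/n}$. The main obstacle is this final bookkeeping step: the threshold $k>1.85$ does not appear in the generic statement and must be recovered by revisiting the covering-number/chaining estimates in \cite{spokoiny2015bootstrapSupp} under the tighter bounded-score assumption available in our discrete setting, so that the numerical constants in $(\mathcal{E}\mathcal{D}_0)$ match the $5e^{-d}$ deviation level. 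Once this is done, the stated lemma follows by direct substitution, while step~(a) is essentially mechanical in the finite-alphabet, $\thetmin>0$ regime.
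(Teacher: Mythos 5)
There is a mismatch between what you set out to prove and what the lemma actually asserts, and the place where your plan admits a gap is precisely where all of the content lies. The paper offers no proof of Lemma~\ref{Lemma:Spok} at all: it is introduced with the sentence ``We restate here~\cite[Thm.~B.2]{spokoiny2015bootstrapSupp} for the i.i.d.\ case as a lemma,'' and the conditions of~\cite[Sec.~4]{spokoiny2015bootstrap} appear in the lemma as \emph{hypotheses} (``Let the conditions \dots be fulfilled''), not as claims to be established. Your part~(a) --- verifying $(\mathcal{I})$, $(\mathcal{L}_0)$, $(\mathcal{ED}_0)$ for the discrete model --- is therefore not required to prove the lemma as stated (though it would be required to legitimately invoke the lemma in the subsequent comparison, and the paper itself does not carry out that verification either; it only remarks that $(\mathcal{L}_0)$ is analogous to the event $\event{R}$).

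The genuine gap is in your part~(b). You concede that the threshold $k>1.85$ and the tail level $5\exp(-d)$ ``must be recovered by revisiting the covering-number/chaining estimates,'' i.e., you cannot actually produce the stated bound. But these constants are not meant to be re-derived: they are read off verbatim from~\cite[Thm.~B.2]{spokoiny2015bootstrapSupp}, which already gives the two-sided bracketing $\abs{\Lambda_n-\normt{\pmb{v}}^2}\leq k\sqrt{(r+d)^3/n}$ on an event of probability at least $1-5e^{-d}$ for $k$ above that numerical threshold; the only specialization performed in the paper is setting $d=k\log n$ and identifying the standardized score with $\pmb{v}=n^{-1/2}\sum_i\BFmat^{-1/2}\nabla g(X_i;\thetz)$ for i.i.d.\ samples. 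Your proposed route --- absorbing an unspecified constant $C$ into $k$ and then trying to reverse-engineer $1.85$ and $5$ from the chaining argument --- would not reproduce the lemma as stated (an arbitrary $C$ folded into $k$ destroys the specific threshold $k>1.85$), and as written the proposal does not close this step. In short: the correct ``proof'' is a direct citation with the constants taken as given, and the part of your argument that goes beyond citation is exactly the part you have not supplied.
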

\vspace{1mm}

We may bound the \emph{cdf} of $\Lambda_n$ using Lemma~\ref{Lemma:Spok}:
\begin{align}
P_n^*&\leq  \PR{\Lambda_n<a,\event{\Lambda'}^c} +5\exp(-k\log n) \nonumber\\
&\leq \PR{ \normt{\pmb{v}}^2<a_s } + 5\exp(-k\log n)\,,\label{eq:spok_bnd}
\end{align}
where $a_s\triangleq a+k\sqrt{\frac{(r+k\log n)^3}{n}}$.
We may then approximate the first term on the r.h.s. of~\eqref{eq:spok_bnd} with the \emph{cdf} of a $\chi^2_r$ distribution employing Lemma~\ref{Lemma_Bentkus}. This yields:
\begin{align}
P_n^*- \frac{h(\thetz)}{\sqrt{n}}&\leq F(r,a_s) + 5\exp(-k\log n) \,. \label{eq:P*spokon}
\end{align}
Additionally, from the proposed upper bound in~\eqref{eq:P_bound}:
\begin{align}
P_n^*-\frac{h(\thetz)}{\sqrt{n}}&\leq F\!\left(r,a+\frac{\Delta(n)}{1-\delta_s}\right) + \epsilon(n,\delta')  +2r\exp\left(-\frac{2\delta}{r}\right). \label{eq:P*_our}
\end{align}

We see that the r.h.s. of both~\eqref{eq:P*spokon} and~\eqref{eq:P*_our} tend to $F(r,a)$ as $n\to\infty$; therefore we proceed to compare the following two quantities:
\begin{align}
T_1 &\triangleq \min \bigg\{F\!\left(r,a+\frac{\Delta(n)}{1-\delta_s}\right)+\epsilon(n,\delta')  +2r\exp\left(-\frac{2\delta}{r}\right),1\bigg\}- F(r,a)\,, \label{Bound1}\\
T_2 &\triangleq F(r,a_s) + 5\exp(-k\log n) -F(r,a)\,, \label{Bound2}
\end{align}
where in~\eqref{Bound1} we use the trivial fact that $P_n^*-\frac{h(\thetz)}{\sqrt{n}}<1$ and omit large values of $T_1$ for small $n$.

\subsubsection{Simulation:}
In order to compare the performance of our bound and the one derived from~\cite[Thm.~B.2]{spokoiny2015bootstrapSupp}, we consider a binary model ($r=1$) with $\thetz=\{0.4\}$, i.e., $X\sim\textnormal{Ber}(0.6)$, and analyze the \emph{cdf} at $a=1$. A numerical analysis by optimizing over $\delta$ and $\delta'$ in~\eqref{Bound1} and $k$ in \eqref{Bound2}, depicted in Fig.~\ref{fig:Comp_bnd}, reveals that $T_2$ is smaller than $T_1$ for some values of $n$ which implies that our approach may be improved in some regimes of finite sample, if the conditions of Lemma~\ref{Lemma:Spok} are satisfied. A similar enhancement is possible for the lower bound of $P_n^*$ in~\eqref{eq:P_bound}.

\begin{figure} %[t!]
\centering
\includegraphics[width=0.75\linewidth]{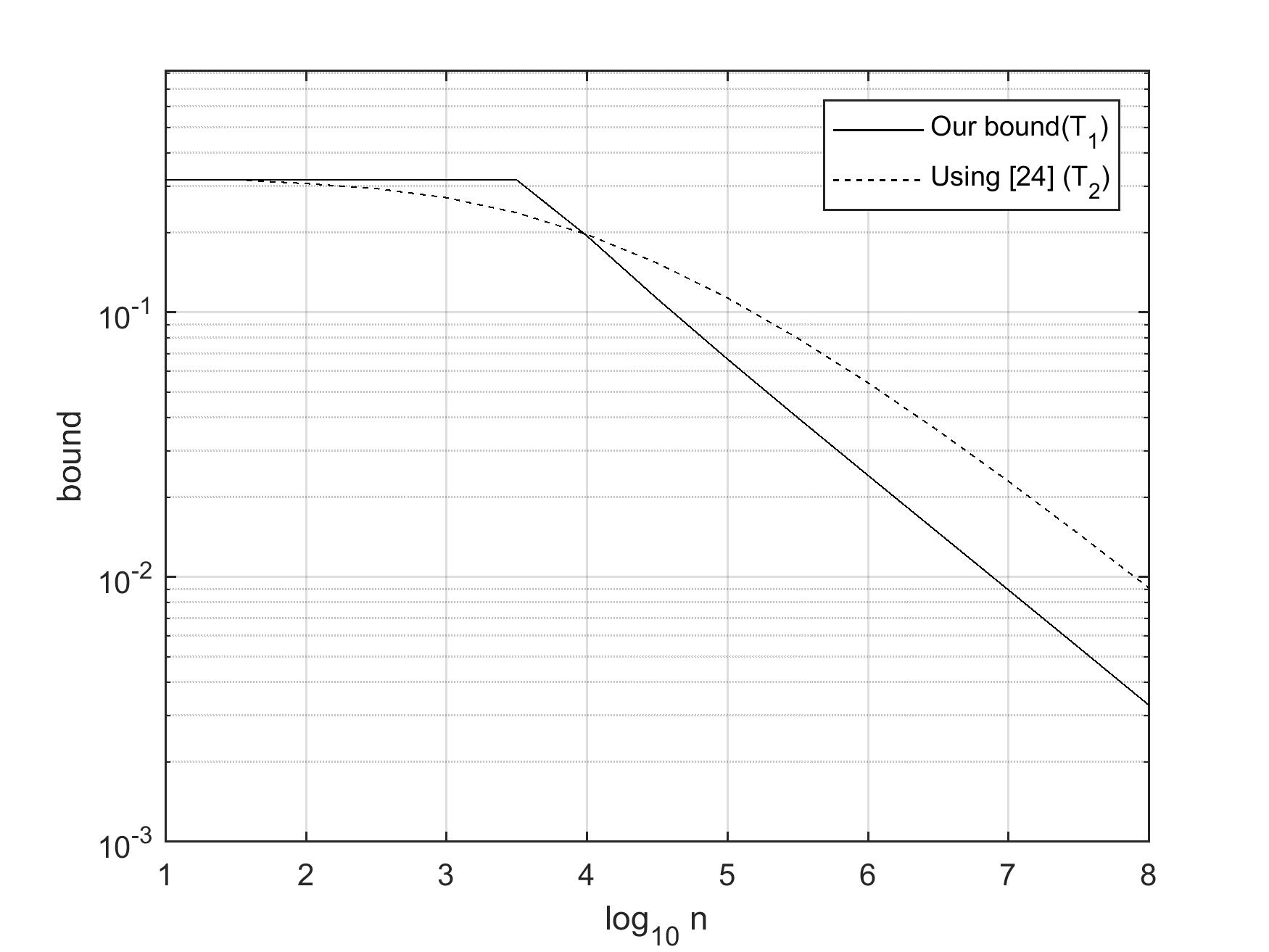}
\caption{Comparison of~\eqref{Bound1} and~\eqref{Bound2} at $a=1$.}
\label{fig:Comp_bnd}
\end{figure}

% -----------------------------------------------------------------------------
%--------------------------- %
%-------   Chi^2     ------- %
%--------------------------- %
\subsection{\texorpdfstring{$\chi^2$}{X2} Approximation}

The penalty for approximating the \emph{cdf} of the quadratic term $\normt{\pmb{v}}^2$ with that of a $\chi_r^2$ random variable is presented in Lemma~\ref{Lemma_Bentkus}. This result was originally developed in~\cite{bentkus2003dependence}, and its approximation error is of order $1/\sqrt{n}$. As it is mentioned in Remark~\ref{Remark:Benktus}, there exist other bounds whose penalty decays faster (see \cite{Prokhorov2013}). These bounds, although computable, impose restrictions on the applicability of the results that are contrary to the goal of the present work.

% -----------------------------------------------------------------------------
%--------------------------- %
%------- Conclusion -------- %
%--------------------------- %

\subsection{Conclusion}

The results of this paper indicate that, for a parametric model with $r$ free parameters, the LLR~\eqref{eq:main_problem} asymptotically behaves like a $\chi_r^2$ random variable, in accordance with Wilks' theorem, even in a high-dimensional setting if $\frac{r^6}{n}\to 0$. For a finite number of samples, there is however a penalty of order $1/\sqrt{n}$ which is significant for small $n$. In fact the parameter $h(\thetz)$ in the penalty term $\mu$ could be very large compared to $\sqrt{n}$ for small values of $n$ such that it causes $P_n^*$ to violate the trivial upper bound: $P_n^*\leq 1$. In addition to $h(\thetz)/\sqrt{n}$ but with a smaller impact, other terms in $\mu$ (see \eqref{eq:P_bound}) may also remain larger than one, even after minimization with respect to $\delta$ and $\delta'$, and the said trivial bound would again be violated.

The extension of the results presented here to a non-i.i.d. case is a possible future direction of work. For instance, dependency between samples could be added to the model to address more general setups, since potential methods accepting dependency exist in~\cite{spokoiny2015bootstrap}. In such case, Hoeffding's inequality would not be trivially applied in Lemma~\ref{Lemma_ln_bound}, and it must be replaced with a Hoeffding-type inequality which accepts dependency~\cite{GLYNN2002143}. Additionally, Bernstein's inequality, used to describe the behavior of the remainder matrix $\rnm$ in Lemma~\ref{Lemma_lambda_deviation}, holds for independent samples and it would also need to be extended.

% ==========================================================
%===========================================================
%===========================================================
%======================== Appendices   =====================
%===========================================================
%===========================================================
\vspace{1.4cm}
\appendix
% =============================================================================
\section{Proof of Lemma~\ref{Lemma_ln_bound}}
\label{Appendix:Lemma_ln_bound_proof}

We start by using the union bound to relate the $l_2$-norm of $\lnv$ to that of each of its components,
\begin{align}
\PR{ \frac{1}{n}\normt{\lnv}^2>\delta } &= \PR{ \frac{1}{n}\sum\nolimits_{j=1}^r \abs{l_{n,j}}^2>\delta } \nonumber \\
&\leq \PR{ \bigcup\nolimits_{j=1}^r \left\{\frac{1}{n}\abs{l_{n,j}}^2>\frac{\delta}{r}\right\} } \nonumber\\
&\leq \sum_{j=1}^r \PR{ \frac{1}{\sqrt{n}}\abs{l_{n,j}} >\sqrt{\frac{\delta}{r}} } .\label{ln_dev_UB}
\end{align}

To characterize each element $l_{n,j}$, consider the solution of the ML estimator~\eqref{eq:ML_equ} for our model. 
It is not hard to find that the solution, $\thets$, is the empirical distribution given by the samples, i.e.,
\begin{align}
\theta^*_j= \sum_{i=1}^{n} \frac{\mathds{1}(X_i=j)}{n}\quad \forall\, j=[1:r]\,.
\end{align}
Therefore, by definition of $\lnv$ in~\eqref{eq:ln_def}, we obtain 
\begin{align}
l_{n,j}=\sum_{i=1}^{n} \frac{\mathds{1}(X_i=j)-\theta^0_j}{\sqrt{n}} \quad \forall\, j=[1:r]\,. \label{eq:lnj_exp}
\end{align}
Given that the samples are i.i.d., $\bE{l_{n,j}}=0$, and
\begin{equation}
\abs{\frac{\mathds{1}(X_i=j)-\theta_j^0}{\sqrt{n}}} \leq \frac{1}{\sqrt{n}}\quad \forall\, j=[1:r]\,. 
\end{equation}
we may employ Hoeffding's inequality to obtain
\begin{align}
\PR{\frac{1}{\sqrt{n}} \abs{l_{n,j}} > \sqrt{\frac{\delta}{r}} } \leq 2\exp\left(-\frac{2n\delta}{r}\right). \label{eq:hoeff_ln}
\end{align}
The Lemma's statement follows from~\eqref{ln_dev_UB} and~\eqref{eq:hoeff_ln}, which concludes the proof.

\qedsymbol

% =============================================================================
\section{Proof of Lemma~\ref{Lemma_Taylor_NP}}
\label{Appendix:Lemma_Taylor_NP_proof}

By the mean value theorem, there exists $\alpha\in\mathbb{R}$ such that $\Lambda_n$ in~\eqref{eq:main_problem} may be expanded around $\thetz$ using a Taylor series with Lagrange remainders:
\begin{align}
2(L_n^*-L_n^0) &=2(\thets-\thetz)^T \cdot\sum_{i=1}^n \nabla g(X_i;\thetz) + (\thets-\thetz)^T \cdot\sum_{i=1}^n \nabla^2 g(X_i;\thetz) \cdot(\thets-\thetz) \nonumber\\
&\quad +\alpha\normt{\thets-\thetz}^3\sum_{i=1}^n G(X_i) \\ %\label{Ln_Taylor_init} \\
&=2\, \lnv^T\, \tnv - \lnv^T\, \jnm\, \lnv +\frac{\alpha}{\sqrt{n}} \normt{\lnv}^3 \bar{G}_n\,,
\label{eq:Ln_Taylor}
\end{align}
where $\lnv$, $\tnv$, $\jnm$, $\bar{G}_n$, and $G(X_i)$ are defined in~\eqref{eq:ln_def}, \eqref{eq:tn_def}, \eqref{eq:j_def}, \eqref{eq:Gn_def}, and~\eqref{eq:G_def}, respectively. Furthermore, $\abs{\alpha}\leq\frac{r^{3/2}}{3}$; to see this, note that the remainder of the Taylor expansion, called $E_3$ in the sequel, can be written as:
\begin{align}
E_3=\frac{1}{6}\sum_{i=1}^{n}\sum_{u,v,w=1}^{r}(\theta^*_u-\theta^0_u)(\theta^*_v-\theta^0_v)(\theta^*_w-\theta^0_w)g_{uvw}(X_i;\thet')\,,
\end{align}
where $\thet'$ lies in the line between $\thetz$ and $\thets$. Hence, using the definition of $G(X_i)$ we may obtain:
\begin{align}
\abs{E_3}&\leq\frac{1}{6}\abs{\sum_{i=1}^{n}\sum_{u,v,w=1}^{r}(\theta^*_u-\theta^0_u)(\theta^*_v-\theta^0_v)(\theta^*_w-\theta^0_w)g_{uvw}(X_i;\thet')}\nonumber\\
&\stackrel{(a)}{\leq} \frac{1}{6}\sum_{i=1}^{n}\sum_{u,v,w=1}^{r}\abs{(\theta^*_u-\theta^0_u)(\theta^*_v-\theta^0_v)(\theta^*_w-\theta^0_w)g_{uvw}(X_i;\thet')}\nonumber\\ 
&\stackrel{(b)}{\leq} \frac{1}{6}\sum_{i=1}^{n} G(X_i)\sum_{u,v,w=1}^{r}\abs{(\theta^*_u-\theta^0_u)(\theta^*_v-\theta^0_v)(\theta^*_w-\theta^0_w)}\nonumber\\
&\stackrel{(c)}{\leq} \frac{r^{3/2}}{6}\normt{\thets-\thetz}^3\sum_{i=1}^{n} G(X_i)\,,\label{eq:Lagrange_remainder1}
\end{align}
where $(a)$ is due to the triangle inequality, $(b)$ follows from the definition of $G(X_i)$, and $(c)$ stems from the Cauchy--Schwarz inequality, i.e., $\sum_{u=1}^{r}\abs{\theta^*_u-\theta^0_u} \leq \sqrt{r} \normt{\thets-\thetz}\,$.

We may perform similar steps as before and carry out a Taylor expansion of $\nabla L_n^*$ to obtain
\begin{align}
\tnv=\jnm \, \lnv - \frac{\normt{\lnv}^2}{\sqrt{n}} \bar{G}_n\,\pmb{\alpha'}\,, \label{eq:ML_Taylor}
\end{align}
where $\pmb{\alpha'}\in\mathbb{R}^r$ and $\abs{\smash{\alpha'_j}}\leq \frac{r}{2}$ for $j\in[1:r]$.
To derive \eqref{eq:ML_Taylor}, we note that $\thets$ is the solution of an ML optimization problem, i.e., $\nabla L_n^*=0$. By Taylor expansion on $\nabla L_n^*$ with Lagrange remainders around $\thetz$, we have that:
\begin{align}
\nabla L_n^0 + \sum_{i=1}^n\nabla^2 g(X_i;\thetz)\cdot (\thets-\thetz)+\normt{\thets-\thetz}^2\sum_{i=1}^n G(X_i)\,\pmb{\alpha'}=0\,.\label{eq:ML_Taylor_init}
\end{align}
To obtain the bound on $\alpha'_j$, the Lagrange remainder for the $j$-th component is 
\begin{equation}
E_{2,j}=\frac{1}{2}\sum_{i=1}^{n}\sum_{u,v=1}^{r}(\theta^*_u-\theta^0_u)(\theta^*_v-\theta^0_v)g_{juv}(X_i;\thet')\,,
\end{equation}
and as in~\eqref{eq:Lagrange_remainder1}, we may obtain:
\begin{align}
\abs{E_{2,j}}\leq \frac{r}{2}\normt{\thets-\thetz}^2\sum_{i=1}^{n} G(X_i)\,.\label{eq:Lagrange_remainder2}
\end{align} 
Then, noticing that $\nabla L_n^0=\sqrt{n}\,\tnv$, from~\eqref{eq:ML_Taylor_init} we obtain~\eqref{eq:ML_Taylor} after dividing by $\sqrt{n}$ and reordering the terms.

Given that Assumption~\ref{asm:J} holds true, $\jnm^{-1}$ exists and we may reformulate~\eqref{eq:ML_Taylor} as
\begin{align}
\jnm^{-1}\tnv &= \lnv - \frac{\normt{\lnv}^2}{\sqrt{n}} \bar{G}_n\,\jnm^{-1}\pmb{\alpha'}\,, \label{ML_Tatlor0}\\
\tnv^T \, \jnm^{-1} \, \tnv &= \tnv^T \,\lnv - \frac{\normt{\lnv}^2}{\sqrt{n}} \bar{G}_n\,\tnv^T \, \jnm^{-1}\pmb{\alpha'}\,.\label{ML_Tatlor1}
\end{align}
Also, by multiplying $\lnv^T$ to both sides of~\eqref{eq:ML_Taylor}, we find that:
\begin{align}
\lnv^T\, \tnv=\lnv^T \, J_n\, \lnv - \frac{\normt{\lnv}^2}{\sqrt{n}} \bar{G}_n\,\lnv^T \pmb{\alpha'}\,.\label{ML_Tatlor2}
\end{align}
Subtracting~\eqref{ML_Tatlor1} from~\eqref{ML_Tatlor2}, and noting that transposing scalars does not change the result, we may conclude that:
\begin{align}
2\,\lnv^T\, \tnv -\lnv^T \, J_n \, \lnv &= \tnv^T \, \jnm^{-1} \, \tnv- \frac{\normt{\lnv}^2}{\sqrt{n}}\, \bar{G}_n \big( \lnv^T - \tnv^T \, \jnm^{-1} \big) \pmb{\alpha'}\nonumber\\
&= \tnv^T \, \jnm^{-1} \, \tnv - \left( \frac{\normt{\lnv}^2}{\sqrt{n}} \bar{G}_n \right)^{\!2} \,{\pmb{\alpha'}}^T \jnm^{-1}\pmb{\alpha'}\,, \label{Ln_Taylor_side2}
\end{align}
where the last equality follows from~\eqref{ML_Tatlor0}.
Substituting~\eqref{Ln_Taylor_side2} into~\eqref{eq:Ln_Taylor}, we obtain
\begin{align}
2 (L_n^*-L_n^0) &= \tnv^T \, \jnm^{-1} \, \tnv - \left( \frac{\normt{\lnv}^2}{\sqrt{n}} \bar{G}_n \right)^{\!2} \,{\pmb{\alpha'}}^T \jnm^{-1}\pmb{\alpha'} +\frac{\alpha}{\sqrt{n}} \normt{\lnv}^3 \bar{G}_n\,,
\end{align}
which concludes the proof of Lemma~\ref{Lemma_Taylor_NP}.

\qedsymbol

% =============================================================================
\section{Proof of Lemma~\ref{Lemma_eig_B}}
\label{Appendix:Lemma_eig_B_proof}

The spectral norm of the matrix $\bmat = \BFmat^{-\frac{1}{2}}\,\rnm\,\BFmat^{-\frac{1}{2}}$ is defined as
\begin{align}
\mtnorm{\bmat}&=\max_{\normt{\pmb{x}}^2= 1}\, |\pmb{x}^T \bmat\,\pmb{x}|\nonumber\\
&=\max_{\normt{\pmb{x}}^2\leq 1}\, |\pmb{x}^T \bmat\,\pmb{x}|\nonumber\\
&=\max_{\normt{\smash{\BFmat^{1/2}}\pmb{y}}^2\leq 1} | \pmb{y}^T \rnm\, \pmb{y}|\,,
\end{align}
where $\pmb{y}\triangleq \BFmat^{-\frac{1}{2}}\pmb{x}$.
The Fisher information matrix $\BFmat$ is symmetric and positive definite, so it may be diagonalized as $\BFmat=\pmb{P}^T \pmb{D} \pmb{P}$, where the diagonal elements of $\pmb{D}$ are positive.
Let $\pmb{z}\triangleq \pmb{P} \pmb{y}$ and $\pmb{R}'\triangleq \pmb{P} \rnm \pmb{P}^T$, then
\begin{align}
\mtnorm{\bmat}&=\max_{\normt{\pmb{D}^{1/2}\pmb{z}}^2\leq 1}\, |\pmb{z}^T \pmb{R}'\, \pmb{z}| \nonumber\\
&\leq \max_{\mumin \normt{\pmb{z}}^2\leq 1}\, |\pmb{z}^T \pmb{R}'\, \pmb{z}| \nonumber\\
&=\max_{\normt{\pmb{z}'}^2\leq 1} \frac{|{\pmb{z}'}^T \pmb{R}'\, \pmb{z}'|}{\mumin}=\frac{\mtnorm{\pmb{R}'}}{\mumin} \nonumber\\
&=\frac{\mtnorm{\rnm}}{\mumin}\,, \label{Bound_lambda_R}
\end{align}
where $\mumin=\lammin(\BFmat)$ is the smallest eigenvalue of $\BFmat$ and $\pmb{z}'\triangleq \sqrt{\mumin}\,\pmb{z}$. The last equality holds since $\pmb{R}'$ and $\rnm$ are unitarily equivalent by definition.

We may bound $\mumin$ using the definition of $\BFmat$ in~\eqref{eq:sigma_matrix},
\begin{align}
\mumin = \min_{\normt{\pmb{x}}^2= 1} | \pmb{x}^T \BFmat \pmb{x} | &= \min_{\normt{\pmb{x}}^2= 1} \abs{\pmb{x}^T \operatorname{diag}\! \left( \frac{1}{\theta^0_1}, \ldots, \frac{1}{\theta^0_r} \right) \pmb{x} + \frac{1}{\thetres^0} \bigg(\sum_{k=1}^r x_k\bigg)^2}\nonumber\\
&\geq \min_{\normt{\pmb{x}}^2= 1} \abs{\pmb{x}^T \operatorname{diag}\! \left( \frac{1}{\theta^0_1}, \ldots, \frac{1}{\theta^0_r} \right) \pmb{x}}\nonumber\\
&\geq 1\,.\label{eq:mumin_LB}
\end{align}
Joining~\eqref{Bound_lambda_R} and~\eqref{eq:mumin_LB} concludes the proof.

\qedsymbol

% =============================================================================
\section{Proof of Lemma~\ref{Lemma_lambda_deviation}}
\label{Appendix:Lemma_lambda_dev_proof}

The perturbation matrix $\rnm\in\mathbb{R}^{r\times r}$, defined in~\eqref{eq:Rn_def}, may be expressed as the sum of $n$ zero-mean random matrices, i.e.,
\begin{align}
\rnm= \frac{1}{n}\sum_{i=1}^n \mim{i} \,,
\end{align}
where, according to the definition of $\jnm$ in~\eqref{eq:j_def},
\begin{align}
\mim{i}\triangleq\left[-\nabla^2 g(X_i;\thetz)-\BFmat\right]. \label{eq:Mi_def}
\end{align}
Before proceeding, we calculate the values of the Hessian matrix in order to characterize $\mim{i}$. 
Let $u,v\in[1:r]$, then the first and second derivatives of the function $g(X;\thet)$ are 
\begin{align}
g_u(X;\thet)=\begin{cases}
\frac{1}{\theta_u}  & \textnormal{if } X=u\,,\\
\frac{-1}{\thetres} & \textnormal{if } X=r+1\,,\\
0 & \text{otherwise,}
\end{cases}\quad
g_{uv}(X;\thet)=\begin{cases}
\frac{-1}{\theta_u^2} & \textnormal{if } X=u=v\,,\\
\frac{-1}{\thetres^2} & \textnormal{if } X=r+1\,,\\
0 & \text{otherwise.}
\end{cases}. \label{eq:g_deriv}
\end{align}
Therefore, if $X_i\in[1:r]$, then the matrix $-\nabla^2 g(X_i;\thetz)$ has only one non-zero value which is located in its diagonal, whereas if $X_i=r+1$, every element of the matrix is equal to $(\frac{1}{\thetres^0})^2$.

Employing the Bernstein inequality for matrices~\cite[Thm.~1.6.2]{Tropp2015}, the following probability bound on the norm of $R_n$ holds for all $\delta'>0$ if $\mtnorm{\mim{i}}\leq\nu$ for all $i\in[1:n]$:
\begin{align}
\PR{ \mtnorm{\rnm}\geq \delta' } \leq 2r \exp\left(\frac{-n\,\delta'^2/2}{\omega(\rnm) + \nu\,\delta'/3}\right), \label{eq:Bern_init}
\end{align}
where 
\begin{equation}
\omega(\rnm) \triangleq n\mtnorm{ \smash{\bE{\rnm^2}} \vphantom{^2} } = \mtnorm{ \smash{\bE{\mim{1}^2}} \vphantom{^2} }, \label{variance_sum}
\end{equation} 
and the equality in~\eqref{variance_sum} is due to the i.i.d. nature of the matrices $\mim{i}$.
In the following, we find upper bounds on $\nu$ and $\omega(\rnm)$ with respect to $r$ and $\thetmin$.

% -------------------------------------
\subsection{Upper Bound on \texorpdfstring{$\nu$}{v}}
The value of the matrix $\mim{i}$ depends on the particular realization of the random variable $X_i$ according to~\eqref{eq:g_deriv}. Hence, we study the two cases $X_i\in[1:r]$ and $X_i=r+1$ independently.

\subsubsection{\texorpdfstring{$X_i=a\in [1:r]$}{Xi < r+1}}
As it was mentioned, in this case, the matrix $-\nabla^2 g(X_i;\thetz)$ has only one non-zero element which equals $(\frac{1}{\theta^0_a})^2$ and it is located in the $a$-th diagonal position.
Therefore, employing~\eqref{eq:Mi_def} and~\eqref{eq:sigma_matrix}, $\mim{i}$ may be expressed as
\begin{align}
\mim{i} &= -\nabla^2 g(a;\thetz)-\BFmat %\nonumber\\
= \wm -\frac{1}{\thetres^0}\mathbf{1}\mathbf{1}^T\,, \label{Mi_a}
\end{align}
where $\wm\triangleq\text{diag}\left(\pmb{\beta}_a \right)$ and 
\begin{equation}
\pmb{\beta}_a \triangleq \left[\frac{-1}{\theta^0_1}, \dots, \frac{-1}{\theta^0_{a-1}}, \,\bigg( \frac{1}{(\theta^0_a)^2}-\frac{1}{\theta^0_a} \bigg), \,\frac{-1}{\theta^0_{a+1}}, \dots, \frac{-1}{\theta^0_r}\right]^T. 
\end{equation}

The maximum eigenvalue of $\mim{i}$ may be bounded as follows,
\begin{align}
\lammax(\mim{i})&=\max_{\normt{\pmb{x}}^2\leq 1} \pmb{x}^T \bigg(\wm-\frac{1}{\thetres^0}\mathbf{1}\mathbf{1}^T\bigg) \pmb{x} \nonumber\\
&\leq \max\limits_{\normt{\pmb{x}}^2\leq 1} \Bigg[ \pmb{x}^T \wm \pmb{x} -\frac{1}{\thetres^0}\bigg(\sum_{k=1}^r x_k\bigg)^2 \Bigg]\nonumber\\
&\leq \max\limits_{\normt{\pmb{x}}^2\leq 1} \pmb{x}^T \wm \pmb{x}\nonumber\\
&\stackrel{(a)}{=}\frac{1}{(\theta^0_a)^2}-\frac{1}{\theta^0_a}\nonumber\\
&\stackrel{(b)}{\leq} \frac{1}{\thetmin^2}-\frac{1}{\thetmin}\,, \label{eq:lammax_Mi_1}
\end{align}
where $(a)$ is due to all the elements of $\wm$ being negative except at $(a,a)$, and $(b)$ stems from the fact that the function is monotonically decreasing.
On the other hand, the minimum eigenvalue may be bounded differently,
\begin{align}
-\lammin(\mim{i})&=\lammax(-\mim{i})\nonumber\\
&=\max_{\normt{\pmb{x}}^2\leq 1}\pmb{x}^T (-\mim{i}) \pmb{x}\nonumber\\
&=\max_{\normt{\pmb{x}}^2\leq 1} \Bigg[-\pmb{x}^T \wm \pmb{x} + \frac{1}{\thetres^0}\bigg(\sum_{k=1}^r x_k\bigg)^2\Bigg] \nonumber\\
&\stackrel{(a)}{\leq} \frac{1}{\thetmin}  +  \frac{1}{\thetres^0}\max_{\normt{\pmb{x}}^2\leq 1} \bigg(\sum_{k=1}^r x_k\bigg)^2 \label{eq:Wa_eig}\\
&\stackrel{(b)}{\leq} \frac{1}{\thetmin}  +  \frac{r}{\thetres^0}\label{cauchy_bound}\\
&\leq \frac{r+1}{\thetmin}, \label{eq:lammin_Mi_1}
\end{align}
where $(a)$ holds since the largest element on the diagonal of $-\wm$ is $\frac{1}{\thetmin}$, and $(b)$ is due to the Cauchy--Schwarz inequality as in~\eqref{eq:Lagrange_remainder1}.

\subsubsection{\texorpdfstring{$X_i=r+1$}{Xi = r+1}}
In this case, all the elements of the matrix $-\nabla^2 g(X_i;\thetz)$ are equal to $(\frac{1}{\thetres^0})^2$ according to~\eqref{eq:g_deriv}.
Therefore, $\mim{i}$ may be expressed as
\begin{equation}
\mim{i}=\pmb{U}+ \left(\frac{1}{(\thetres^0)^2}-\frac{1}{\thetres^0}\right) \mathbf{1}\mathbf{1}^T\,, \label{eq:Mi_r2}
\end{equation}
where $\pmb{U}\triangleq\text{diag}\left(\pmb{\tau}\right)$ and 
\begin{equation}
\pmb{\tau}\triangleq\left[\frac{-1}{\theta^0_1},\dots,\frac{-1}{\theta^0_r}\right]^T.
\end{equation}
We proceed to analyze the maximum and minimum eigenvalues of $\mim{i}$ as before.

If $X_i=r+1$, the following bound on $\lammax(\mim{i})$ holds:
\begin{align}
\lammax(\mim{i})&= \max_{\normt{\pmb{x}}^2\leq 1} \Bigg[ \pmb{x}^T \pmb{U} \pmb{x} + \left(\frac{1}{(\thetres^0)^2}-\frac{1}{\thetres^0}\right)\bigg(\sum_{k=1}^r x_k\bigg)^2 \Bigg] \nonumber\\
&\leq \max_{\normt{\pmb{x}}^2\leq 1} \pmb{x}^T \pmb{U} \pmb{x} + \max_{\normt{\pmb{x}}^2\leq 1} \left(\frac{1}{(\thetres^0)^2}-\frac{1}{\thetres^0}\right)\bigg(\sum_{k=1}^r x_k\bigg)^2\nonumber\\
&\stackrel{(a)}{\leq} -1 + \max_{\normt{\pmb{x}}^2\leq 1} \left(\frac{1}{(\thetres^0)^2}-\frac{1}{\thetres^0}\right)\bigg(\sum_{k=1}^r x_k\bigg)^2 \nonumber\\ %\label{eq:U_eig}\\
&\stackrel{(b)}{\leq} r\bigg(\frac{1}{\thetmin^2}-\frac{1}{\thetmin}\bigg)-1\,, \label{eq:lammax_Mi_2}
\end{align}
where $(a)$ is due to all the elements of $\pmb{U}$ being less than or equal to $-1$, and $(b)$ stems from the Cauchy--Schwarz inequality. 
For the minimum eigenvalue of $\mim{i}$, we obtain that
\begin{align}
-\lammin(\mim{i})&=\max_{\normt{\pmb{x}}^2\leq 1} \Bigg[ -\pmb{x}^T \pmb{U} \pmb{x} - \left(\frac{1}{(\thetres^0)^2}-\frac{1}{\thetres^0}\right)\bigg(\sum_{k=1}^r x_k\bigg)^2 \Bigg] \nonumber\\
&\leq \max_{\normt{\pmb{x}}^2\leq 1} -\pmb{x}^T\,\pmb{U}\,\pmb{x} \nonumber\\
&\leq \frac{1}{\thetmin}\,.\label{eq:lammin_Mi_2}
\end{align}

We note that~\eqref{eq:lammin_Mi_1} is larger than~\eqref{eq:lammin_Mi_2}; thus, the spectral norm of $\mim{i}$ is always bounded from above by
\begin{equation}
\nu = \max\left\{\frac{r+1}{\thetmin} \,,\, r\bigg(\frac{1}{\thetmin^2}-\frac{1}{\thetmin}\bigg)-1\,,\, \frac{1}{\thetmin^2}-\frac{1}{\thetmin}\right\}, \label{eq:def_nu}
\end{equation}
i.e., $\mtnorm{\mim{i}}\leq \nu$.

% -------------------------------------
\subsection{Upper Bound on \texorpdfstring{$\omega(\rnm)$}{w(Rn)}}

% Note that an in \eqref{eq:Bern_init}, $\omega(R_n)$ can be replaced with an upper bound $\omega\geq \omega(R_n)$. In this section we determine $\omega$. 
According to~\eqref{variance_sum}, we only need to evaluate $\bE{\mim{1}^2}$ and find its spectral norm. 
In the previous part, we calculated the value of $\mim{i}$ for different values of $X_i$ in~\eqref{Mi_a}. In particular, if $X_1=a\in[1:r]$, then
\begin{align}
\mim{1}^2 = \wm^2 - \frac{1}{\thetres^0} \Big[ \mathbf{1}\pmb{\beta}_a^T +  \pmb{\beta}_a\mathbf{1}^T \Big] + \frac{r}{(\thetres^0)^2}\mathbf{1}\mathbf{1}^T\,,\label{eq:M2_1}
\end{align}
and if $X_1=r+1$,
\begin{align}
\mim{1}^2 = \pmb{U}^2 + \left(\frac{1}{(\thetres^0)^2} -\frac{1}{\thetres^0}\right) \Big[ \mathbf{1}\pmb{\tau}^T + \pmb{\tau}\mathbf{1}^T \Big] + r\left(\frac{1}{(\thetres^0)^2}-\frac{1}{\thetres^0}\right)^2\mathbf{1}\mathbf{1}^T\,. \label{eq:M2_2}
\end{align}
Therefore, by averaging over $X_1$, from~\eqref{eq:M2_1} and~\eqref{eq:M2_2}, we find that
\begin{align}
\bE{\mim{1}^2}=\pmb{U}' + \mathbf{1}\pmb{\gamma}^T + \pmb{\gamma}\mathbf{1}^T + \kappa\, \mathbf{1}\mathbf{1}^T,
\end{align}
where $\pmb{U}'$ is a diagonal matrix defined as
\begin{equation}
\pmb{U}'\triangleq  \sum_{a=1}^{r}\theta_a^0 \wm^2 + \thetres^0 \pmb{U}^2\,, 
\end{equation}
$\pmb{\gamma}$ is a vector defined as
\begin{align}
\pmb{\gamma} \triangleq \frac{-1}{\thetres^0}\sum_{a=1}^r \theta^0_a\, \pmb{\beta}_a + \bigg( \frac{1}{\thetres^0}-1 \bigg) \pmb{\tau} \,,\label{eq:gamma_def}
\end{align}
and
\begin{equation}
\kappa \triangleq r\,\frac{1-\thetres^0}{(\thetres^0)^3}\,.
\end{equation}

Given that both $\wm$ and $\pmb{U}$ are diagonal matrices, each diagonal element of the matrix $\pmb{U}'$ may be calculated as follows,
\begin{align}
U'_{ii}&= \sum_{a=1}^{r}\theta_a^0 W_{a,ii}^2 + \thetres^0 U_{ii}^2\nonumber\\
&=\bigg[\frac{1}{(\theta_i^0)^2}\sum_{a=1}^r \theta_a^0 + \frac{1}{(\theta_i^0)^3} -\frac{2}{(\theta_i^0)^2}\bigg] + \frac{\thetres^0}{(\theta_i)^2}\nonumber\\
&=\frac{1}{(\theta_i^0)^3} -\frac{1}{(\theta_i^0)^2}\,. \label{G_diag_UB}
\end{align}
Moreover, using~\eqref{eq:gamma_def}, each element of $\pmb{\gamma}$ may be expressed as
\begin{align}
\gamma_i &= \frac{-1}{\thetres^0}\sum_{a=1}^{r}\theta_a^0\, \beta_{a,i} + \bigg( \frac{1}{\thetres^0}-1 \bigg) \tau_i \nonumber\\
&=\frac{-1}{\thetres^0}\bigg(\frac{1}{\theta^0_i} \Big(1-\sum_{j=1}^r \theta^0_j \Big)\bigg) - \bigg(\frac{1}{\thetres^0}-1\bigg)\frac{1}{\theta^0_i}\nonumber\\
&=-\frac{1}{\thetres^0\,\theta^0_i}\,. \label{gamma_vec_UB}
\end{align}

Now, for the spectral norm of $\bE{\mim{1}^2}$, we first find the maximum eigenvalue:
\begin{align}
\lammax(\bE{\mim{1}^2}) &= \max_{\normt{\pmb{x}}^2\leq 1} \pmb{x}^T \bE{\mim{1}^2}\pmb{x} \nonumber\\
&=\max_{\normt{\pmb{x}}^2\leq 1} \pmb{x}^T \pmb{U}'\pmb{x} + 2\bigg(\sum_{i=1}^r x_i \bigg) \bigg(\sum_{i=1}^r \gamma_i x_i \bigg) + \kappa \bigg(\sum_{i=1}^r x_i \bigg)^2\nonumber\\
&= \max_{\normt{\pmb{x}}^2\leq 1} \pmb{x}^T \pmb{U}'\pmb{x} - 2\bigg(\sum_{i=1}^r x_i \bigg) \bigg(\sum_{i=1}^r \frac{x_i}{\thetres^0\,\theta^0_i} \bigg) + \kappa \bigg(\sum_i x_i \bigg)^2\\
&\leq \max_{\normt{\pmb{x}}^2\leq 1} \pmb{x}^T \pmb{U}'\pmb{x} + \frac{2\,r}{\thetmin^2}+ \kappa r\label{eq:cauchy_M2}\\
&\leq  \bigg(\frac{1}{\thetmin^3} -\frac{1}{\thetmin^2}\bigg) +\frac{2\,r}{\thetmin^2} + \frac{r^2(1-\thetmin)}{\thetmin^3}\,, \label{eq:M2_lammax}
\end{align}
where~\eqref{eq:cauchy_M2} is obtained by applying the Cauchy--Schwarz inequality. The final step is due to~\eqref{G_diag_UB} while noting that $U'_{ii}\leq \frac{1}{\thetmin^3} -\frac{1}{\thetmin^2}$ and an upper bound for $\kappa$.

Next we compute $-\lammin(\bE{\mim{1}^2})$, or equivalently, $\lammax(-\bE{\mim{1}^2})$:
\begin{align}
\lammax(-\bE{\mim{1}^2})=\max_{\normt{\pmb{x}}^2\leq 1} -\pmb{x}^T \pmb{U}'\pmb{x} - 2\bigg(\sum_{i=1}^r x_i \bigg) \bigg(\sum_{i=1}^r \gamma_i x_i \bigg) - \kappa \bigg(\sum_{i=1}^r x_i \bigg)^2\nonumber
\end{align} 
Since $\kappa$ and all the diagonal elements of $\pmb{U}'$ are positive we obtain:
\begin{align}
\lammax(-\bE{\mim{1}^2})&\leq \max_{\normt{\pmb{x}}^2\leq 1}  2\bigg(\sum_{i=1}^r x_i \bigg)\bigg(\sum_{i=1}^r \frac{x_i}{\thetres^0\,\theta^0_i} \bigg)\nonumber\\ 
&\leq\frac{2r}{\thetmin^2} \label{eq:M2_lammin}
\end{align} 
where we use the Cauchy--Schwarz inequality in the last step. By choosing the more relaxed bound between~\eqref{eq:M2_lammax} and~\eqref{eq:M2_lammin}, we conclude that:
\begin{align}
\mtnorm{ \smash{\bE{\mim{1}^2}} \vphantom{^2} } \leq \omega\,,
\end{align}
where
\begin{equation}
\omega\triangleq \frac{1}{\thetmin^3} \Big( 1-\thetmin(r-1)^2 + r^2 \Big)\,. \label{eq:def_omega}
\end{equation}
Finally, by joining~\eqref{eq:Bern_init}, \eqref{eq:def_nu}, and~\eqref{eq:def_omega}, the proof of Lemma~\ref{Lemma_lambda_deviation} is complete.

\qedsymbol

% =============================================================================
\section{Proof of Lemma~\ref{Lemma_Jn_eig}}
\label{Appendix:Lemma_Jn_eig_proof}

The spectral norm of $\jnm^{-1}$ is bounded by the minimum absolute eigenvalue of $\jnm$, i.e.,
\begin{align}
\mnorm{\jnm^{-1}}= \left\{\min_{\normt{\pmb{x}}^2= 1} \abs{\pmb{x}^T \jnm \pmb{x}}\right\}^{-1}.\label{eq:Jn_lam_ub1}
\end{align}
Then by the definition of $\rnm$ in~\eqref{eq:Rn_def} we have that
\begin{align}
\min_{\normt{\pmb{x}}^2= 1} \abs{\pmb{x}^T \jnm \pmb{x}}&= \min_{\normt{\pmb{x}}^2= 1} \abs{\pmb{x}^T \BFmat \pmb{x} + \pmb{x}^T \rnm \pmb{x}}\nonumber\\
&\stackrel{(a)}{\geq} \min_{\normt{\pmb{x}}^2= 1} \bigg[ \abs{\pmb{x}^T \BFmat \pmb{x}} - \abs{\pmb{x}^T \rnm \pmb{x}} \bigg] \nonumber\\ % \label{eq:Jn_lam_tri}\\
&\geq \min_{\normt{\pmb{x}}^2= 1} \abs{\pmb{x}^T \BFmat \pmb{x}} - \max_{\normt{\pmb{x}}^2= 1}\abs{\pmb{x}^T \rnm \pmb{x}},\label{eq:Jn_lam_min}
\end{align}
where $(a)$ follows from the triangle inequality assuming that the first term in larger than the second one.
The first term on the r.h.s. of~\eqref{eq:Jn_lam_min} is the minimum eigenvalue of the Fisher information matrix.
From the lower bound~\eqref{eq:mumin_LB} and the definition of the spectral norm of $\rnm$, we may bound~\eqref{eq:Jn_lam_min} as
\begin{align}
\min_{\normt{\pmb{x}}^2= 1} \abs{\pmb{x}^T \jnm \pmb{x}} \geq 1-\mtnorm{\rnm}\,.
\end{align}
The proof is complete by substituting this bound into~\eqref{eq:Jn_lam_ub1}.

\qedsymbol

% =============================================================================
\section{Proof of Lemma~\ref{Lemma_Bounded_sum_G}}
\label{Appendix:Lemma_Bounded_sum_G_proof}

From the definition of $G(X_i)$ in~\eqref{eq:G_def}, in order to compute each summand in~\eqref{eq:Gn_def}, we require the third derivative of the function $g(X;\thet)$, found in~\eqref{eq:func_g}. Specifically,
\begin{equation}
G(X_i) = \sup_{\thet'}\abs{g'''_{uvw}(X_i;\thet')}\,, 
\end{equation}
where $\thet'$ lies on the line between $\thetz$ and $\thets$, i.e., $\thet'=\thetz+t(\thets-\thetz)$ for some $0\leq t\leq 1$.
Let $u,v,w\in[1:r]$, then
%the first and second derivatives are
%\begin{align}
%g'_u(X;\thet)=\begin{cases}
%\theta_u^{-1} & X=u\\
%\thetres^{-1} & X=r+1\\
% 0 & \text{Otherwise}
% \end{cases},\quad
% g''_{uv}(X;\thet)=\begin{cases}
% \frac{-1}{\theta_u^2} & X=u=v\\
% \frac{-1}{\thetres^2} & X=r+1\\
% 0 & \text{Otherwise}
% \end{cases},\label{eq:g_deriv}
%\end{align}
% 
%while 
the third derivative is
\begin{align}\label{eq:g_3dev}
g'''_{uvw}(X;\thet)=\begin{cases}
2\,\theta_u^{-3}  & X=u=v=w\,,\\
-2\thetres^{-3} & X=r+1\,,\\
0 & \text{otherwise.}
\end{cases}
\end{align}
Now, if $X_i=a\in[1:r]$, we have that
\begin{align}
G(a)&=\sup_{\theta'_a} \frac{2}{(\theta'_a)^3}\nonumber\\
&\stackrel{(a)}{\leq} \frac{2}{ \big( \theta^0_a-\abs{\theta^*_a-\theta^0_a} \big)^3}\nonumber\\
&\stackrel{(b)}{\leq} \frac{2}{ \big( \thetmin-\frac{1}{\sqrt{n}}\normt{\lnv} \big)^3}\,, \label{eq_G_1}
\end{align}
where $(a)$ holds since $\theta'_a\geq \theta^0_a-\abs{\theta^*_a-\theta^0_a}\geq 0$ as long as $\abs{\theta^*_a-\theta^0_a}\leq \theta_a$, and $(b)$ is due to $\sqrt{n} \abs{\theta^*_a-\theta^0_a} \leq \normt{\lnv}$ according to~\eqref{eq:ln_def} and the definition of the $l_2$-norm.
Furthermore, using the Lemma's assumption that $\normt{\lnv}\leq\sqrt{\frac{n}{r}}\,\thetmin$, we see that inequality $(a)$ holds true since $\abs{\theta^*_a-\theta^0_a} \leq \frac{1}{\sqrt{r}}\,\thetmin \leq\thetmin \leq \theta^0_a$.
Similarly, if $X_i=r+1$, then
\begin{align}
G(r+1)&=\sup_{\thetres'} \frac{2}{(\thetres')^3}\nonumber\\
&\stackrel{(a)}{\leq} \frac{2}{\big( \thetres^0-\abs{\thetres^*-\thetres^0} \big)^3}\nonumber\\
&\stackrel{(b)}{\leq} \frac{2}{\big( \thetmin-\sqrt{\frac{r}{n}}\normt{\lnv} \big)^3}\,, \label{eq_G_2}
\end{align}
where $(a)$ follows from noting that $\thetres' = \thetres^0 +t(\thetres^*-\thetres^0)$ for some $0\leq t\leq 1$ and using similar steps as in~\eqref{eq_G_1}, and $(b)$ is due to the Cauchy--Schwarz inequality:
\begin{equation}
 \abs{\thetres^*-\thetres^0}=\abs{\sum\nolimits_{j=1}^{r}\theta^*_j-\theta^0_j}\leq\sqrt{\frac{r}{n}}\,\normt{\lnv}\leq\,\thetmin\,,
\end{equation}
where the last inequality is due to the Lemma's statement.

Finally, given that~\eqref{eq_G_1} is smaller than~\eqref{eq_G_2}, we obtain that
\begin{align}
\frac{1}{n}\sum_{i=1}^{n} G(X_i)\leq\frac{2}{ \big( \thetmin-\sqrt{\frac{r}{n}}\normt{\lnv} \big)^3 }\,,
\end{align}
which concludes the proof of the Lemma.

\qedsymbol

\section*{References}

\bibliography{ref}

\end{document}